%% file: main.tex
\documentclass{article}

\usepackage{PRIMEarxiv}

\input{math_commands.tex}   % theorem environments, math macros

\usepackage{amsmath}
\usepackage{mathtools}
\usepackage[utf8]{inputenc}
\usepackage[T1]{fontenc}
\usepackage{hyperref}
\usepackage{url}
\usepackage{booktabs}
\usepackage{amsfonts}
\usepackage{amssymb}
\usepackage{hyphenat}
\usepackage{microtype}
\usepackage{verbatim}
\usepackage{nicefrac}
\usepackage{fancyhdr}
\usepackage{graphicx}
\graphicspath{{media/}}
\usepackage{pgfplots}
\pgfplotsset{compat=1.17}
\usepackage{mathrsfs}
\usepackage{algorithm}
\usepackage{algorithmic}
\usepackage{subfig}
\usepackage[numbers]{natbib}

\title{Minimax Estimation of Kernel Stein Discrepancy: Trace versus Hilbert--Schmidt Scales}

\author{
  Davit Gogolashvili \\
  Weierstrass Institute\\
  Anton-Wilhelm-Amo-Straße 39, 10117 Berlin  \\
  \texttt{davit.gogolashvili@wias-berlin.de}
}

\begin{document}
\maketitle

%% ====================================================================
\begin{abstract}
Kernel Stein Discrepancy (KSD) compares a sample to a fixed target
distribution known only through its score, and is widely used for
goodness-of-fit testing, sample quality assessment, and approximate
inference.  We study the estimation of $\operatorname{KSD}(P_0,P)$ from
$n$ independent observations and identify the sharp spectral constant
governing the minimax risk: it is the Hilbert--Schmidt norm of the Stein
covariance operator $C_\star$, giving the minimax scale
$\sqrt{\|C_\star\|_{\mathrm{HS}}/n}$.  This scale is attained by
the positive-part square-root U-statistic, whereas the standard plug-in
V-statistic remains at the trace scale
$\sqrt{\operatorname{tr}(C_\star)/n}$ and is therefore suboptimal by the
fourth root of the effective rank of $C_\star$; for a Gaussian target
with a fixed-bandwidth Gaussian kernel this factor is exponential in the
dimension. 
\end{abstract}

%% ====================================================================
\section{Introduction}\label{sec:intro}

Comparing a sample to a target distribution is a basic task in statistics
and machine learning. In many applications the target density is known only
up to a normalizing constant. This happens, for example, for Bayesian
posteriors, energy-based models, and distributions produced by approximate
MCMC methods. The kernel Stein discrepancy (KSD) is useful in this setting
because it depends on the target \(P_0\) through the score
\(\nabla\log p_0\), and not through the normalizing constant.

Stein's method was introduced by \citet{stein1972bound} and then developed
as a general method for distributional approximation; see, for example,
\citet{stein1986approximate} and \citet{chen2011normal}. Its use for sample
quality assessment is more recent. \citet{gorham2015measuring} proposed a
computable Stein discrepancy for comparing exact, biased, and deterministic
sample sequences. The kernel version was introduced soon after by
\citet{liu2016kernelized} and \citet{chwialkowski2016kernel}, who combined
Stein identities with reproducing kernels and used the resulting KSD for
goodness-of-fit testing. \citet{gorham2017measuring} studied when KSD
controls convergence to the target and showed that the choice of kernel is
important for this property. For a recent overview of Stein's method in probabilistic 
inference and learning, see \citet{liu2026probabilistic}.

Several subsequent works developed KSD and related Stein discrepancies in
different directions.  On the algorithmic side, \citet{liu2016stein} used
Stein identities in Stein variational gradient descent, and recent work has
established finite-particle convergence rates in KSD and
Wasserstein metrics \citep{balasubramanian2025improved}. To reduce the quadratic cost of KSD, \citet{huggins2018random} introduced
random feature Stein discrepancies, and \citet{kalinke2024nystr} developed
a Nystr\"om approximation. On the 
testing side, \citet{jitkrittum2017linear} proposed a linear-time kernel Stein test, and
\citet{kanagawa2019kernel} extended Stein testing to latent-variable
models.  Stein discrepancies have also been used as optimization
objectives, for example in Stein points \citep{chen2018stein} and minimum
Stein discrepancy estimation \citep{barp2019minimum}. Since KSD is often
used as a goodness-of-fit statistic \citep{hagrass2026minimax,kalinke2026nystr}, minimax testing theory is also
relevant. In this direction, \citet{hagrass2026minimax} study minimax
goodness-of-fit testing with KSD at a fixed target \(P_0\). Their analysis
shows that ordinary unregularized KSD tests can be minimax suboptimal for
detecting alternatives separated in \(\chi^2(P,P_0)\), and that spectral
regularization is needed to attain the optimal testing boundary.

The closest comparison inside kernel methods is the minimax theory for
kernel mean embeddings and the maximum mean discrepancy (MMD). For a
reproducing kernel \(k\), the kernel mean embedding of \(P\) is
\(\mu_P=\mathbb E_{X\sim P}k(X,\cdot)\), and
\(\operatorname{MMD}(P,Q)=\|\mu_P-\mu_Q\|_{\mathcal H}\)
\citep{gretton2012kernel,muandet2017kernel}. \citet{tolstikhin2017minimax}
studied estimation of the embedding \(\mu_P\) itself. For
translation-invariant kernels on \(\mathbb R^d\), they proved that the
rate \(n^{-1/2}\) is minimax in the RKHS norm, and also in the
\(L^2(\mathbb R^d)\) norm, over discrete distributions and over
distributions with infinitely differentiable densities.

The scalar problem of estimating MMD was studied in
\citet{tolstikhin2016minimax}. For radial universal kernels on
\(\mathbb R^d\), they proved lower bounds matching the empirical MMD
estimator and its U-statistic variant, with rate
\(n^{-1/2}+m^{-1/2}\). Their sharp lower bound uses two fuzzy hypotheses
and removes a superfluous dimension dependence that appears if one argues
only through mean-embedding estimation. The KSD problem considered here
is closer to this scalar problem than to estimation of the whole mean
element. Indeed,
\[
    \operatorname{KSD}(P_0,P)
    =
    \big\|
        \mathbb E_{X\sim P}\xi_{P_0}(X)
    \big\|_{\mathcal H},
\]
where \(\xi_{P_0}\) is the Stein feature \ref{eq:steinfeature}. The main difference from the
usual MMD setting is that the target \(P_0\) is fixed and the Stein
feature depends on its score, hence it is typically unbounded.

The result closest to ours is \citet{cribeiro2025minimax}.  They establish
a minimax lower bound of order \(n^{-1/2}\) for KSD estimation, both for
Langevin--Stein KSD on \(\mathbb R^d\) and for KSDs on general domains.
Their minimax analysis is worst-case over both the target \(P_0\) and the
sampling distribution \(P\).  Their result settles the sample-size rate
and, together with existing upper bounds, implies rate optimality of the
usual V-statistic.  In this paper the target distribution is fixed, and the
worst case is taken only over the sampling distribution \(P\).  Our
analysis identifies the sharp spectral scale of estimation, which is not
visible from the optimal dependence on the sample size \(n\) alone.  In
particular, we show that the V-statistic is suboptimal in this spectral
sense, whereas the debiased U-statistic is minimax optimal.  For example,
for a Gaussian target with a Gaussian kernel, the gap between the two
scales grows exponentially with the dimension.

The relation to quadratic functional estimation is also worth mentioning.
A classical example is the problem of estimating \(\int f^2\), or more
generally quadratic functionals of an unknown density; see, for example,
\citet{birge1995estimation} and \citet{gine2008simple}, together with the
references therein. In that problem the density has to be reconstructed
from the observations. If one uses a Rosenblatt--Parzen estimator, the
bandwidth has to balance smoothing bias and variance. This is why the
optimal rate depends on the regularity of the unknown density, with the
usual elbow phenomenon at high regularity \citep{birge1995estimation}. The KSD problem is different in this respect. Once the target score and
the base kernel are fixed, the Stein kernel \(K_{P_0}\) is known. Thus
\(
  \operatorname{KSD}^2(P_0,P)
  =
  \mathbb E_{X,Y\sim P}K_{P_0}(X,Y)
\)
is the expectation of a known two-sample kernel. Consequently, there is no
density-smoothing bias for the unknown sampling distribution \(P\), and
the rates are always parametric.

\paragraph{Contributions.}
In this paper, we study the minimax optimal estimation of $\operatorname{KSD}(P_0,P)$. We are not merely
interested in convergence rates, but also in identifying the sharp spectral scale that governs the minimax scale.
The relevant  object is the covariance
operator of the Stein feature under the target,
\(
    C_\star
    =
    \mathbb E_{P_0}
    \bigl[
    \xi_{P_0}(X)\otimes \xi_{P_0}(X)
    \bigr]
\),
where \(\xi_{P_0}\) is the Langevin--Stein feature map (see
Section~\ref{sec:setup} for precise definitions). We show that the
 minimax scale is
\(
    \sqrt{\|C_\star\|_{\mathrm{HS}}/n}.
\)
Thus the  constant is governed by the Hilbert--Schmidt size of
\(C_\star\).

The lower bound is proved by a fuzzy-hypothesis construction along the
eigendirections of \(C_\star\). The matching upper bound is obtained by a
simple estimator: remove the diagonal terms from the usual squared KSD
estimator and take the positive square root. This is the square-root
U-statistic. Hence the minimax scale is achieved without estimating the
full Stein mean embedding.

We also quantify the behavior of the usual plug-in estimator. The plug-in V-statistic pays the trace scale because its
squared form keeps the diagonal terms. Its loss relative to the minimax
scale is the fourth root of the effective rank
\eqref{eq:effective rank}. This comparison is secondary to the
identification of the minimax scale, but it explains why removing the
diagonal terms matters. For the Gaussian target with a Gaussian kernel, we
compute the spectral quantities explicitly; for fixed bandwidth, the
resulting gap grows exponentially with dimension.

\paragraph{Notation.}
For a separable Hilbert space $\mathcal{H}$, $\langle\cdot,\cdot\rangle$
and $\|\cdot\|$ are its inner product and norm, $a\otimes b$ is the
rank-one operator $x\mapsto\langle b,x\rangle a$, and for a trace-class
operator $T$ we write $\operatorname{tr}(T)$ for its trace,
$\|T\|_{\mathrm{HS}}=\operatorname{tr}(T^2)^{1/2}$ (for self-adjoint $T$)
for its Hilbert--Schmidt norm, and $\|T\|_{\mathrm{op}}$ for its operator
norm. For probability measures, $\mathrm{TV}$, $\mathrm{KL}$ and $\chi^2$
denote total variation, Kullback--Leibler and chi-square divergences. We
write $a\lesssim b$ if $a\le Cb$ for an absolute constant $C$, and
$a\asymp b$ if $a\lesssim b\lesssim a$. For $u \in \R,$ $(u)_+=\max(u,0),$ and  
\(\lceil u\rceil\) denotes the smallest integer greater than or equal to \(u\).

\paragraph{Organization.}
Section~\ref{sec:setup} introduces the Stein feature map, the two
estimators, and the covariance operator $C_\star$ whose spectrum governs
the problem.  Section~\ref{sec:main} contains the main results.
Theorem~\ref{thm:upper} bounds the plug-in estimator by the trace scale
$\sqrt{\operatorname{tr}(C_\star)/n}$, and Theorem~\ref{thm:ustat} bounds
the debiased estimator by the Hilbert--Schmidt scale
$\operatorname{tr}(C_\star^2)^{1/4}/\sqrt n$.  Theorem~\ref{thm:lower}
gives a matching minimax lower bound, so that
Corollary~\ref{cor:optimal} identifies the Hilbert--Schmidt scale as
minimax optimal.  Proposition~\ref{prop:vlower} shows that the plug-in is
also bounded from below by the trace scale, and
Corollary~\ref{cor:gap} quantifies its loss as the fourth root of the
effective rank of $C_\star$.  Section~\ref{sec:gaussian} computes all
spectral quantities for the Gaussian target with a Gaussian kernel, where
the loss is exponential in the dimension for fixed bandwidth and
polynomial for bandwidth scaled with the dimension.  Proofs of the lower
bound and of the Gaussian calculations are given in the appendix.

\section{Setup}\label{sec:setup}

Let $P_0$ be the target distribution on
$\mathcal X\subseteq\mathbb R^d$. We assume that $P_0$ has a continuously
differentiable, everywhere positive density $p_0$, and we write
\[
  s_0(x)=\nabla\log p_0(x)
\]
for its score. The key feature of the KSD is that it depends on the target
only through this score. Hence the normalizing constant of $p_0$ is never
needed.

Let $k\in C^{1,1}(\mathcal X\times\mathcal X)$ be a positive definite
kernel with RKHS $\mathcal H_k$. We use the vector-valued Hilbert space
\(
  \mathcal H:=\mathcal H_k^d .
\)
The Langevin--Stein feature map associated with $P_0$ is the
$\mathcal H$-valued function
\begin{equation}\label{eq:steinfeature}
    \xi_{P_0}(x)
  :=
  s_0(x)k(x,\cdot)+\nabla_x k(x,\cdot).
\end{equation}
This feature map is the basic object of the paper: the KSD will be the norm
of the average of these features.

The inner product of two Stein features defines the Stein kernel
\[
  K_{P_0}(x,y)
  :=
  \left\langle
  \xi_{P_0}(x),\xi_{P_0}(y)
  \right\rangle_{\mathcal H}.
\]
Expanding this inner product gives the familiar closed form
\begin{equation}\label{eq:steinkernel}
\begin{aligned}
  K_{P_0}(x,y)
  &=
  s_0(x)^\top s_0(y)\,k(x,y)
  +s_0(x)^\top\nabla_y k(x,y)  \\
  &\quad
  +s_0(y)^\top\nabla_x k(x,y)
  +\nabla_x\!\cdot\!\nabla_y k(x,y).
\end{aligned}
\end{equation}
The expression above is useful computationally, because it allows the KSD
to be evaluated using only the score of the target and derivatives of the
base kernel.

For a sampling distribution $P$, define the Stein mean embedding by
\[
  \mu_P
  :=
  \mathbb E_P \xi_{P_0}(X),
\]
whenever the Bochner integral is well defined. The non-normalized kernel
Stein discrepancy is then
\[
  \operatorname{KSD}(P_0,P)
  :=
  \|\mu_P\|_{\mathcal H}.
\]
Thus the statistical problem studied here is not to estimate the whole
Hilbert-space vector $\mu_P$, but only its norm.

We impose the following basic integrability and Stein-identity assumption.

\begin{assumption}\label{ass:integrability}
Stein's identity holds at the target:
\[
  \mu_{P_0}
  =
  \mathbb E_{P_0}\xi_{P_0}(X)
  =
  0.
\]
Moreover,
\[
  \mathbb E_{P_0}K_{P_0}(X,X)<\infty,
\]
and for every sampling distribution $P$ considered below,
\[
  \mathbb E_P K_{P_0}(X,X)<\infty.
\]
\end{assumption}

The moment condition guarantees that the Stein mean embedding exists. It also gives the standard quadratic representation of the squared KSD:
if $X$ and $Y$ are independent draws from $P$, then
\begin{equation}\label{eq:ksd-identity}
  \operatorname{KSD}^2(P_0,P)
  =
  \|\mu_P\|_{\mathcal H}^2
  =
  \mathbb E_{X,Y\sim P}
  K_{P_0}(X,Y).
\end{equation}

Given i.i.d. samples $X_1,\dots,X_n\sim P$, the most common estimator
replaces the population Stein mean embedding by its empirical version
\[
  \widehat\mu_n
  :=
  \frac1n\sum_{i=1}^n \xi_{P_0}(X_i).
\]
The resulting plug-in estimator is
\[
  \widehat{\operatorname{KSD}}_V
  :=
  \|\widehat\mu_n\|_{\mathcal H}.
\]
Squaring it gives the V-statistic form
\begin{equation}\label{eq:vstat}
    \widehat{\operatorname{KSD}}_V^2
  =
  \frac1{n^2}
  \sum_{i,j=1}^n K_{P_0}(X_i,X_j).
\end{equation}

This estimator includes the diagonal terms $K_{P_0}(X_i,X_i)$. They are
harmless for consistency but, as we show below, responsible for the
suboptimal  behaviour of the plug-in estimator.

The debiased estimator removes these diagonal terms from the squared
estimator. Define
\begin{equation}\label{eq:ustat}
  U_n
  :=
  \frac1{n(n-1)}
  \sum_{i\neq j} K_{P_0}(X_i,X_j),
  \qquad
  \widehat{\operatorname{KSD}}_U
  :=
  \sqrt{(U_n)_+}.
\end{equation}
The statistic $U_n$ is unbiased for
$\operatorname{KSD}^2(P_0,P)$, while the positive square root turns it
back into an estimator of the KSD itself.

The  behaviour of these estimators is governed by the covariance
operator of the Stein feature under the target:
\begin{equation}\label{eq:cstar}
  C_\star
  :=
  \mathbb E_{P_0}
  \left[
  \xi_{P_0}(X)\otimes\xi_{P_0}(X)
  \right].
\end{equation}
Because $\mu_{P_0}=0$, this is both the second-moment operator and the
covariance operator of the Stein feature at the target. It is positive,
self-adjoint, and trace-class, with
\[
  \operatorname{tr}(C_\star)
  =
  \mathbb E_{P_0}K_{P_0}(X,X).
\]
Let $(\lambda_j,e_j)_{j\ge1}$ denote its nonzero eigenpairs, ordered as
\(
  \lambda_1\ge\lambda_2\ge\cdots>0.
\)
The two spectral quantities that drive the theory are
\[
  \operatorname{tr}(C_\star)
  =
  \sum_{j\ge1}\lambda_j,
  \qquad
  \operatorname{tr}(C_\star^2)
  =
  \sum_{j\ge1}\lambda_j^2.
\]
When $\operatorname{tr}(C_\star^2)>0$, define the effective rank
\begin{equation}\label{eq:effective rank}
  r_{\mathrm{eff}}(C_\star)
  :=
  \frac{\operatorname{tr}(C_\star)^2}
       {\operatorname{tr}(C_\star^2)}.
\end{equation}
This quantity measures how spread out the Stein spectrum is.

\section{Main Results}\label{sec:main}
In this section we present the main results.  We first prove upper bounds
for the plug-in V-statistic and the debiased U-statistic, then establish
the matching minimax lower bound.

\subsection{Upper bounds}
In this subsection we derive upper bounds for the two estimators introduced
in Section~\ref{sec:setup}.  We begin with the plug-in.

\begin{theorem}[V-statistic upper bound]\label{thm:upper}
Under Assumption~\ref{ass:integrability}, for every \(P\) with
\(\mathbb{E}_P K_{P_0}(X,X)<\infty\) and every \(n\ge1\),
\[
  \mathbb{E}_P\bigl|\widehat{\operatorname{KSD}}_V
    -\operatorname{KSD}(P_0,P)\bigr|
  \le
  \left(
  \frac{
  \mathbb{E}_P K_{P_0}(X,X)-\operatorname{KSD}^2(P_0,P)
  }{n}
  \right)^{1/2}
  \le
  \left(
  \frac{\mathbb{E}_P K_{P_0}(X,X)}{n}
  \right)^{1/2}.
\]
In particular, at the target,
\[
  \mathbb{E}_{P_0}\widehat{\operatorname{KSD}}_V
  \le
  \sqrt{\frac{\operatorname{tr}(C_\star)}{n}}.
\]
\end{theorem}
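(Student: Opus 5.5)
The plan is to use the reverse triangle inequality in the Hilbert space $\mathcal H$ and then a second-moment bound. Since $\widehat{\operatorname{KSD}}_V=\|\widehat\mu_n\|_{\mathcal H}$ and $\operatorname{KSD}(P_0,P)=\|\mu_P\|_{\mathcal H}$, we have pointwise
\[
  \bigl|\widehat{\operatorname{KSD}}_V-\operatorname{KSD}(P_0,P)\bigr|
  \le \|\widehat\mu_n-\mu_P\|_{\mathcal H}.
\]
By Jensen's inequality, it then suffices to bound $\mathbb E_P\|\widehat\mu_n-\mu_P\|_{\mathcal H}^2$. Writing $Z_i:=\xi_{P_0}(X_i)-\mu_P$, these are i.i.d.\ centered $\mathcal H$-valued random elements. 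Their second moment is finite because $\mathbb E_P\|\xi_{P_0}(X)\|^2=\mathbb E_PK_{P_0}(X,X)<\infty$, which also guarantees that the Bochner mean $\mu_P$ exists.

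Next I expand the square. Independence and centering kill the cross terms, so $\mathbb E\langle Z_i,Z_j\rangle=\langle\mathbb E Z_i,\mathbb E Z_j\rangle=0$ for $i\ne j$. Hence
\[
  \mathbb E_P\|\widehat\mu_n-\mu_P\|^2=\frac1n\,\mathbb E_P\|\xi_{P_0}(X)-\mu_P\|^2
  =\frac1n\Bigl(\mathbb E_P\|\xi_{P_0}(X)\|^2-\|\mu_P\|^2\Bigr).
\]
Using $\|\xi_{P_0}(x)\|^2=K_{P_0}(x,x)$ and identity~\eqref{eq:ksd-identity}, the right-hand side equals $\bigl(\mathbb E_PK_{P_0}(X,X)-\operatorname{KSD}^2(P_0,P)\bigr)/n$. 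This gives the first inequality. The second inequality is immediate because $\operatorname{KSD}^2\ge0$.

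For the statement at the target, take $P=P_0$. Assumption~\ref{ass:integrability} gives $\mu_{P_0}=0$, so $\operatorname{KSD}(P_0,P_0)=0$. Also $\mathbb E_{P_0}K_{P_0}(X,X)=\operatorname{tr}(C_\star)$, which holds since $\operatorname{tr}(\xi\otimes\xi)=\|\xi\|^2$ and trace commutes with the expectation by monotone convergence. Substituting both facts yields the bound.

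The only delicate step is justifying the exchange of expectation and inner product in the cross terms, i.e.\ $\mathbb E\langle Z_i,Z_j\rangle=\langle \mathbb E Z_i,\mathbb E Z_j\rangle$. This requires Bochner integrability of $Z_i$, which follows from the finite second moment via Cauchy--Schwarz, together with the standard fact that bounded linear functionals commute with Bochner integrals. Everything else is routine.
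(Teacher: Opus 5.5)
Your proposal is correct and follows the paper's proof essentially line by line. Both arguments use the reverse triangle inequality and Jensen, the variance identity for the empirical mean of i.i.d.\ $\mathcal H$-valued Stein features, and $\|\xi_{P_0}(x)\|_{\mathcal H}^2=K_{P_0}(x,x)$, together with $\mu_{P_0}=0$ and $\operatorname{tr}(C_\star)=\mathbb E_{P_0}K_{P_0}(X,X)$ at the target; your added justifications (Bochner integrability for the cross terms, monotone convergence for the trace) fill in details the paper leaves implicit.
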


\begin{proof}
By the reverse triangle inequality and Jensen's inequality,
\[
  \mathbb{E}_P
  \bigl|
  \widehat{\operatorname{KSD}}_V-\operatorname{KSD}(P_0,P)
  \bigr|
  \le
  \left(
  \mathbb{E}_P\|\widehat\mu_n-\mu_P\|_{\mathcal H}^2
  \right)^{1/2}.
\]
Note that \(\xi_{P_0}(X_i)\) are i.i.d. with mean \(\mu_P\), therefore
\[
  \mathbb{E}_P\|\widehat\mu_n-\mu_P\|_{\mathcal H}^2
  =
  \frac1n
  \left(
  \mathbb E_P\|\xi_{P_0}(X)\|_{\mathcal H}^2
  -
  \|\mu_P\|_{\mathcal H}^2
  \right).
\]
Since
\(\|\xi_{P_0}(X)\|_{\mathcal H}^2=K_{P_0}(X,X)\) and
\(\|\mu_P\|_{\mathcal H}=\operatorname{KSD}(P_0,P)\), the claim follows.

\end{proof}

The bound of Theorem~\ref{thm:upper} controls the error of the whole Stein
mean element $\mu_P$ in $\mathcal H$, and not of its scalar norm
$\operatorname{KSD}(P_0,P)$ alone. This is not surprising, given the fact that the plug-in 
first estimates the Stein mean element \(\mu_P\) by the empirical mean \(\widehat\mu_n\) and then
takes its norm, so the reverse triangle inequality controls its error by
\(\|\widehat\mu_n-\mu_P\|_{\mathcal H}\). It thus treats the problem as
estimation of the whole vector \(\mu_P\), and not of its norm alone.

At the target this distinction is visible. Since \(\mu_{P_0}=0\), the bound reduces to
\(
  \mathbb E_{P_0}\|\widehat\mu_n\|_{\mathcal H}^2=\operatorname{tr}(C_\star)/n.
\)
The same trace appears in the squared plug-in statistic through its diagonal
terms $K_{P_0}(X_i,X_i)$, whose expectation at the target is
$\operatorname{tr}(C_\star)$.  The debiased statistic of~\eqref{eq:ustat}
removes these diagonal terms, and we analyze it next.

\begin{theorem}[U-statistic upper bound]\label{thm:ustat}
Let \(n\ge2\). Under Assumption~\ref{ass:integrability}, suppose in addition that
\[
  V_P
  :=
  \mathbb E_{P\otimes P}K_{P_0}(X,Y)^2
  <\infty,
\]
where \(X,Y\sim P\) are independent.  Then
\[
  \mathbb E_P U_n
  =
  \operatorname{KSD}^2(P_0,P).
\]
Moreover, with
\[
  \theta:=\operatorname{KSD}(P_0,P),
\]
we have
\begin{equation}\label{eq:ustat-var-bound}
  \operatorname{Var}_P(U_n)
  \le
  \frac{4\theta^2 V_P^{1/2}}{n}
  +
  \frac{2V_P}{n(n-1)}.
\end{equation}
Consequently,
\begin{equation}\label{eq:ustat-risk-holder}
  \mathbb E_P
  \left|
  \widehat{\operatorname{KSD}}_U
  -
  \operatorname{KSD}(P_0,P)
  \right|
  \le
  \bigl(\operatorname{Var}_P U_n\bigr)^{1/4}.
\end{equation}
If \(\theta>0\), then also
\begin{equation}\label{eq:ustat-risk-away}
  \mathbb E_P
  \left|
  \widehat{\operatorname{KSD}}_U
  -
  \operatorname{KSD}(P_0,P)
  \right|
  \le
  \frac{\bigl(\operatorname{Var}_P U_n\bigr)^{1/2}}{\theta}.
\end{equation}
In particular, at the target,
\begin{equation}\label{eq:ustat-null}
  \mathbb E_{P_0}
  \left|
  \widehat{\operatorname{KSD}}_U
  \right|
  \le
  \left(
  \frac{2\operatorname{tr}(C_\star^2)}{n(n-1)}
  \right)^{1/4}
  \le
  \sqrt{2}\,
  \sqrt{\frac{\|C_\star\|_{\mathrm{HS}}}{n}},
  \qquad n\ge2.
\end{equation}
\end{theorem}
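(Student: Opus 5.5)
The plan is to apply the Hoeffding decomposition to the degree-two U-statistic \(U_n\), and then convert the variance bound into risk bounds through elementary inequalities for the square-root map. Unbiasedness is immediate: for \(i\neq j\), the pair \((X_i,X_j)\) is independent, so \(\mathbb E_P K_{P_0}(X_i,X_j)=\|\mu_P\|^2=\theta^2\) by \eqref{eq:ksd-identity}. Integrability holds because \(|K_{P_0}(x,y)|\le K_{P_0}(x,x)^{1/2}K_{P_0}(y,y)^{1/2}\).

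For the variance, write \(h(x)=\langle \xi_{P_0}(x),\mu_P\rangle=\mathbb E_P K_{P_0}(x,Y)\). The kernel then decomposes as \(K_{P_0}(x,y)=\theta^2+(h(x)-\theta^2)+(h(y)-\theta^2)+g(x,y)\), with \(g\) degenerate. The standard formula gives
\[
\operatorname{Var}(U_n)=\frac{4(n-2)}{n(n-1)}\operatorname{Var}h(X)+\frac{2}{n(n-1)}\operatorname{Var}K_{P_0}(X,Y)\le \frac{4}{n}\operatorname{Var}h(X)+\frac{2V_P}{n(n-1)},
\]
using \(\frac{n-2}{n-1}\le1\) and \(\operatorname{Var}K\le V_P\). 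It remains to show \(\mathbb E h(X)^2\le\theta^2 V_P^{1/2}\). Write \(\Sigma_P=\mathbb E_P\xi\otimes\xi\), so that \(\mathbb E h^2=\langle \Sigma_P\mu_P,\mu_P\rangle\le \|\Sigma_P\|_{\mathrm{op}}\theta^2\le\|\Sigma_P\|_{\mathrm{HS}}\theta^2\). Moreover \(\|\Sigma_P\|_{\mathrm{HS}}^2=\mathbb E_{P\otimes P}\langle\xi(X),\xi(Y)\rangle^2=V_P\), which gives \eqref{eq:ustat-var-bound}. The main technical point is that \(\Sigma_P\) is Hilbert--Schmidt with this identity. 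This follows from the finiteness of \(V_P\) by expanding \(\operatorname{tr}(\Sigma_P^2)\) as a double Bochner integral, justified by Fubini since \(\mathbb E\,\|\xi(X)\|^2\|\xi(Y)\|^2<\infty\).

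For the risk bounds, use two scalar inequalities valid for \(u\in\mathbb R\) and \(\theta\ge0\):
\[
|\sqrt{u_+}-\theta|\le |u-\theta^2|^{1/2} \quad\text{and, for } \theta>0,\quad |\sqrt{u_+}-\theta|\le |u-\theta^2|/\theta .
\]
Both follow from \(|\sqrt a-\sqrt b|\le\sqrt{|a-b|}\), from \(|\sqrt a-\sqrt b|=|a-b|/(\sqrt a+\sqrt b)\le |a-b|/\sqrt b\), and from \(|u_+-\theta^2|\le|u-\theta^2|\). Taking expectations and applying Jensen (\(\mathbb E|Z|^{1/2}\le(\mathbb E Z^2)^{1/4}\) and \(\mathbb E|Z|\le(\mathbb E Z^2)^{1/2}\)) with \(Z=U_n-\theta^2\) yields \eqref{eq:ustat-risk-holder} and \eqref{eq:ustat-risk-away}.

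At the target, \(\theta=0\) and \(\Sigma_{P_0}=C_\star\), so \(V_{P_0}=\operatorname{tr}(C_\star^2)\). The variance bound then becomes \(2\operatorname{tr}(C_\star^2)/(n(n-1))\), and \eqref{eq:ustat-risk-holder} gives the first inequality in \eqref{eq:ustat-null}. The second inequality uses \(n(n-1)\ge n^2/2\) for \(n\ge2\), so that \((4\|C_\star\|_{\mathrm{HS}}^2/n^2)^{1/4}=\sqrt2\,\sqrt{\|C_\star\|_{\mathrm{HS}}/n}\).
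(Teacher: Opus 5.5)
Your proposal is correct and follows essentially the same route as the paper: the Hoeffding variance formula for the degree-two U-statistic, the bound \(\zeta_1\le\langle M_P\mu_P,\mu_P\rangle\le\|M_P\|_{\mathrm{HS}}\,\theta^2=V_P^{1/2}\theta^2\) via the second-moment operator, and the two square-root inequalities combined with Jensen, specialised at the target where \(\theta=0\) and \(V_{P_0}=\operatorname{tr}(C_\star^2)\). You also supply details the paper leaves implicit: integrability via Cauchy--Schwarz, the Fubini justification of \(\|\Sigma_P\|_{\mathrm{HS}}^2=V_P\), and the check \(|u_+-\theta^2|\le|u-\theta^2|\).
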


The proof is given in Appendix~\ref{app:proof-ustat}. At the target the bound of Theorem~\ref{thm:ustat} is a consequence of
degeneracy.  Indeed, Stein's identity gives $\mu_{P_0}=0$, so the leading
term in the variance of $U_n$ vanishes and
$\operatorname{Var}_{P_0}U_n\asymp\operatorname{tr}(C_\star^2)/n^2$.  The
squared KSD estimator therefore fluctuates at the scale
$\operatorname{tr}(C_\star^2)^{1/2}/n$, and its square root at the scale
\(\sqrt{\|C_\star\|_{\mathrm{HS}}/n}\).

Theorems~\ref{thm:upper} and~\ref{thm:ustat} thus yield upper bounds
governed by two different spectral functionals of the same operator
$C_\star$: the plug-in estimator is controlled by
$\operatorname{tr}(C_\star)$ and the debiased estimator by
$\operatorname{tr}(C_\star^2)$, so that the two estimators fluctuate at the
target at the scales $\sqrt{\operatorname{tr}(C_\star)/n}$ and
\(\sqrt{\|C_\star\|_{\mathrm{HS}}/n}\) respectively.  Whether the
latter, smaller scale can be improved by any estimator is settled by the
lower bound of the next subsection.

\subsection{Lower bound}

The upper bound for the debiased estimator suggests that the relevant scale is 
\( \operatorname{tr}(C_\star^2)^{1/4}/\sqrt n. \) We now show that this scale 
cannot be improved in the minimax sense. The minimax lower bound we consider is over the 
target-normalized regularity class. More precisely,  for \(A \ge 1\), we define the following class of distributions
\[
  \mathcal P(A)
  :=
  \left\{
  P:
  \mathbb E_{P\otimes P}K_{P_0}(X,Y)^2
  \le
  A\,\operatorname{tr}(C_\star^2)
  \right\},
\]
where \(X,Y\sim P\) are independent. Note that \(\operatorname{tr}(C_\star^2)<\infty\) under
Assumption~\ref{ass:integrability}, therefore the normalization in the definition of \(\mathcal P(A)\) is
well-defined. 

\begin{remark}
One could equivalently introduce an absolute moment class
\[
  \mathcal P_0(A)
  :=
  \left\{
  P:
  \mathbb E_{P\otimes P}K_{P_0}(X,Y)^2
  \le A
  \right\}.
\]
In the present paper we use the target-normalized form
because our goal is to expose the spectral dependence on the target
distribution and the kernel.  At the target,
\(
  \mathbb E_{P_0\otimes P_0}K_{P_0}(X,Y)^2
  =
  \operatorname{tr}(C_\star^2),
\)
so the constant \(A\) measures how much larger the off-diagonal
Stein-kernel fluctuation is under \(P\) than under \(P_0\).  This
normalization also makes the class invariant under rescaling of the Stein
kernel.
\end{remark}

\begin{remark}[Bounded likelihood ratios]\label{rem: bounded likelihood}
The class \(\mathcal P(A)\) contains the usual bounded-likelihood-ratio
neighbourhoods of the target.  Indeed, if \(P\ll P_0\) and
\[
  0\le \frac{dP}{dP_0}\le L,
\]
then
\[
\begin{aligned}
  \mathbb E_{P\otimes P}K_{P_0}(X,Y)^2
  &=
  \mathbb E_{P_0\otimes P_0}
  \left[
  \frac{dP}{dP_0}(X)
  \frac{dP}{dP_0}(Y)
  K_{P_0}(X,Y)^2
  \right]  \\
  &\le
  L^2
  \mathbb E_{P_0\otimes P_0}K_{P_0}(X,Y)^2  \\
  &=
  L^2\operatorname{tr}(C_\star^2).
\end{aligned}
\]
Hence \(P\in\mathcal P(L^2)\).  The class \(\mathcal P(A)\) is more
general, however, because it does not require absolute continuity with
respect to \(P_0\).
\end{remark}

Let us comment on the choice of this class, as it may not be obvious at first glance why we choose
the off-diagonal moment rather than the diagonal moment class:
\[
  \mathcal D(A)
  :=
  \left\{
  P:  
  \mathbb E_P K_{P_0}(X,X)
  \le
  A\,\operatorname{tr}(C_\star)
  \right\}.      
\]
The diagonal class above is natural for estimating the full Stein mean embedding
\(\mu_P\), because it controls the second moment of the Stein
feature \(\xi_{P_0}(X)\). However, in $\operatorname{KSD}$ estimation,
we are only interested in the RKHS norm of the embedding, and not the embedding itself.
This distinction is important because the two problems have different statistical
difficulties.  Typically, estimating the full embedding is a harder problem than estimating the scalar norm alone.
Moreover, by the identity
\[
   \operatorname{KSD}^2(P_0,P)
  =\mathbb E_{P\otimes P}K_{P_0}(X,Y),
\]
it is an off-diagonal expectation of a Stein kernel.  Thus the diagonal
terms are not part of the population quantity itself;
they appear only when one estimates the full mean embedding and then takes
its norm, as in the plug-in/V-statistic estimator. For this reason, the
off-diagonal moment condition defining \(\mathcal P(A)\) is the natural
regularity condition for the scalar KSD estimation problem.

The next result gives the matching minimax lower bound over
\(\mathcal P(A)\).
\begin{theorem}[Minimax lower bound]\label{thm:lower}
Assume \(\operatorname{tr}(C_\star^2)>0\). For every fixed \(A\ge4\), there exists a constant
\(c>0\), independent of \(n\), such that for all sufficiently large
\(n\),
\[
  \inf_{\widehat T}
  \sup_{P\in\mathcal P(A)}
  \mathbb E_P
  \left|
  \widehat T-\operatorname{KSD}(P_0,P)
  \right|
  \ge
  c\,
  \sqrt{\frac{\|C_\star\|_{\mathrm{HS}}}{n}},
\]
where the infimum is over all estimators \(\widehat T\) based on \(n\) i.i.d. samples from \(P\).
\end{theorem}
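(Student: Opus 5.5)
We prove the bound with two fuzzy hypotheses, in the spirit of \citet{tolstikhin2016minimax}. The null hypothesis is the simple point $P_0$, for which $\operatorname{KSD}(P_0,P_0)=0$. The alternative is a mixture over random perturbations of $P_0$ along eigendirections of $C_\star$. The perturbations are built so that each has KSD of order $\varepsilon:=\operatorname{tr}(C_\star^2)^{1/4}/\sqrt n$, while the $n$-fold mixture stays within a constant $\chi^2$-distance of $P_0^{\otimes n}$. The standard reduction then gives
\[
\inf_{\widehat T}\sup_{P\in\mathcal P(A)}\mathbb E_P|\widehat T-\operatorname{KSD}(P_0,P)|\gtrsim \varepsilon\,(1-\mathrm{TV}).
\]
To keep this reduction clean, we may use the version in which the KSD of the alternative exceeds $c\varepsilon$ with high probability under the prior, rather than deterministically.

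\textbf{Construction.} The eigenfunctions are not bounded, so we first truncate. Write $\xi=\xi_{P_0}$ and put $f_j(x):=\langle \xi(x),e_j\rangle/\sqrt{\lambda_j}$. These are orthonormal in $L^2(P_0)$ with mean zero, by Stein's identity. Fix $J$ and a large level $M$, and let $g_j$ be the centred truncation of $f_j$ at level $M$. For i.i.d.\ Rademacher signs $\sigma_j$, set
\[
\frac{dP_\sigma}{dP_0}=1+\sum_{j\le J}\sigma_j\, a_j\, g_j,\qquad a_j=\frac{\gamma\sqrt{\lambda_j}}{\sqrt n\,\operatorname{tr}(C_\star^2)^{1/4}}.
\]
With this choice $\sum_j a_j^2 = \gamma^2\sum_j\lambda_j/(n\operatorname{tr}(C_\star^2)^{1/2})$. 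Since the truncation makes the $g_j$ bounded, for $n$ large the density ratio lies in $[1/2,3/2]$.

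\textbf{KSD of the alternative.} We have $\mu_{P_\sigma}=\sum_j\sigma_j a_j\,\mathbb E_{P_0}[g_j\xi]$. Before truncation, $\mathbb E_{P_0}[f_j\xi]=\sqrt{\lambda_j}\,e_j$, so
\[
\operatorname{KSD}^2(P_0,P_\sigma)=\sum_j a_j^2\lambda_j=\frac{\gamma^2\sum_{j\le J}\lambda_j^2}{n\operatorname{tr}(C_\star^2)^{1/2}}.
\]
Choosing $J$ with $\sum_{j\le J}\lambda_j^2\ge\tfrac12\operatorname{tr}(C_\star^2)$ gives $\operatorname{KSD}\asymp\gamma\varepsilon$ exactly, independently of $\sigma$. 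Truncation perturbs these quantities by an amount that vanishes as $M\to\infty$, uniformly for fixed $J$.

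\textbf{Membership in the class.} The bound $dP_\sigma/dP_0\le 3/2$ together with Remark~\ref{rem: bounded likelihood} gives $P_\sigma\in\mathcal P(9/4)\subset\mathcal P(A)$. This is where the hypothesis $A\ge4$ is used, with room to spare: it allows a ratio up to $2$.

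\textbf{$\chi^2$ bound.} This is the main obstacle. By the Ingster trick,
\[
1+\chi^2\bigl(\mathbb E_\sigma P_\sigma^{\otimes n},P_0^{\otimes n}\bigr)=\mathbb E_{\sigma,\sigma'}\Bigl(1+\sum_{j,k}\sigma_j\sigma'_k a_ja_k G_{jk}\Bigr)^n,
\]
where $G$ is the Gram matrix of the $g_j$ in $L^2(P_0)$; $G$ is close to the identity after truncation. With $G=I$ the right-hand side is
\[
\mathbb E\exp\Bigl(n\sum_j\sigma_j\sigma'_ja_j^2\Bigr)\le\exp\Bigl(\tfrac{n^2}{2}\sum_j a_j^4\Bigr)=\exp\Bigl(\tfrac{\gamma^4}{2}\,\tfrac{\sum_j\lambda_j^2}{\operatorname{tr}(C_\star^2)}\Bigr)\le e^{\gamma^4/2}.
\]
This is exactly why the Hilbert--Schmidt norm, and not the trace, appears: the cross terms cancel under the random signs and only the fourth powers $\lambda_j^2$ survive. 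For small $\gamma$ the bound gives $\mathrm{TV}\le1/2$.

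The remaining technical work is controlling the off-diagonal part of $G$ and its deviation from $I$, which we handle by choosing $M=M(J)$ large; this makes the constant $c$ depend on $C_\star$ but not on $n$, consistent with ``$n$ sufficiently large''. We may also have to handle the case where the spectrum is infinite: there $J$ is finite but chosen by the $\tfrac12$ mass criterion above, so the spectral tail causes no issue.
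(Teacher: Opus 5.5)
Your proposal is correct and is essentially the paper's proof. It uses the same two fuzzy hypotheses: $P_0$ against a Rademacher mixture of perturbations along the top eigendirections, with $J$ chosen so that $\sum_{j\le J}\lambda_j^2\ge\tfrac12\operatorname{tr}(C_\star^2)$. Your coefficient on $g_j$ plays the role of the paper's $a_j/\sqrt{\lambda_j}$ (both of order $\sqrt{\lambda_j}/(\sqrt n\,\operatorname{tr}(C_\star^2)^{1/4})$), and the $\cosh$-based $\chi^2$ bound and the class membership via the bounded-likelihood-ratio remark are also the same.

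The only difference is in the truncation step. The paper re-orthonormalizes the truncated coordinates by $G_\tau^{-1/2}$ with $\tau_n=n^{1/4}$, so the $\chi^2$ computation stays exact. You instead fix $M$ and absorb $G-I$ into the exponent. That is valid, since $n\,\bigl|\sum_{j,k}\sigma_j\sigma'_k a_j a_k (G-I)_{jk}\bigr|\le\gamma^2\sqrt{J}\,\|G-I\|_{\mathrm{op}}$, which can be made small by taking $M=M(J)$ large.
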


The proof can be found in Appendix~\ref{app:proof-lower}. As we see,
the achievable optimal rates are parametric \(n^{-1/2}\), but most importantly,  Theorem~\ref{thm:lower} 
shows that no estimator can estimate \(\operatorname{KSD}\)\ uniformly 
over the regular class \(\mathcal P(A)\) at a scale smaller than
\(\sqrt{\|C_\star\|_{\mathrm{HS}}/n}\), up to constants.  The
Hilbert--Schmidt scale attained by \(\widehat{\operatorname{KSD}}_U\) is
therefore minimax optimal. This fixed-target lower bound refines the global 
minimax picture; cf. \citet{cribeiro2025minimax}, where the minimax risk is worst-case 
over both the target and the sampling distribution.

\subsection{Minimax optimality and the V/U gap}

We now combine the lower bound with the U-statistic upper bound.

\begin{corollary}[Minimax optimality]\label{cor:optimal}
Assume the conditions of Theorems~\ref{thm:lower} and~\ref{thm:ustat}.
For every fixed \(A\ge4\),
\[
  \inf_{\widehat{T}}
  \sup_{P\in\mathcal P(A)}
  \mathbb{E}_P
  \bigl|
  \widehat{T}-\operatorname{KSD}(P_0,P)
  \bigr|
  \;\asymp_A\;
  \frac{\operatorname{tr}(C_\star^2)^{1/4}}{\sqrt{n}}
  =
  \sqrt{\frac{\|C_\star\|_{\mathrm{HS}}}{n}}.
\]
Moreover, 
\(\widehat{\operatorname{KSD}}_U\) attains this rate.
\end{corollary}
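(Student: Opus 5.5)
The lower-bound half of the two-sided equivalence is exactly Theorem~\ref{thm:lower}: for fixed \(A\ge4\) and all sufficiently large \(n\), the minimax risk over \(\mathcal P(A)\) is at least \(c\sqrt{\|C_\star\|_{\mathrm{HS}}/n}\). The remaining task is the matching upper bound. It suffices to show that \(\widehat{\operatorname{KSD}}_U\) has risk at most \(C_A\sqrt{\|C_\star\|_{\mathrm{HS}}/n}\) \emph{uniformly} over \(\mathcal P(A)\), since the minimax risk is bounded by the worst-case risk of any particular estimator. The identity \(\operatorname{tr}(C_\star^2)^{1/4}/\sqrt n=\sqrt{\|C_\star\|_{\mathrm{HS}}/n}\) is immediate from \(\|C_\star\|_{\mathrm{HS}}=\operatorname{tr}(C_\star^2)^{1/2}\).

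For the upper bound I will use \eqref{eq:ustat-risk-holder} together with the variance bound \eqref{eq:ustat-var-bound} of Theorem~\ref{thm:ustat}. Write \(T:=\operatorname{tr}(C_\star^2)\). For \(P\in\mathcal P(A)\) we have \(V_P\le AT\), so \(V_P<\infty\) and Theorem~\ref{thm:ustat} applies. The only term not yet controlled by the class is \(\theta^2=\operatorname{KSD}^2(P_0,P)\). By \eqref{eq:ksd-identity} and Jensen's inequality,
\[
\theta^2=\mathbb E_{P\otimes P}K_{P_0}(X,Y)\le V_P^{1/2}\le (AT)^{1/2}.
\]
Substituting into \eqref{eq:ustat-var-bound} gives
\[
\operatorname{Var}_P U_n\le \frac{4AT}{n}+\frac{2AT}{n(n-1)}.
\]
Taking fourth roots yields risk of order \(A^{1/4}T^{1/4}n^{-1/4}\). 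This has the wrong power of \(n\), so the crude bound \(\theta^2\le V_P^{1/2}\) alone is not enough. This is the main obstacle: the linear variance term \(4\theta^2V_P^{1/2}/n\) must be balanced against \(\theta\).

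To handle it I will split into two cases at the threshold \(\theta\) versus \(\rho:=T^{1/4}/\sqrt n\). If \(\theta\le\rho\), the bound \(\theta^2\le \rho^2=T^{1/2}/n\) gives
\[
\operatorname{Var}_PU_n\le \frac{4A^{1/2}T}{n^2}+\frac{4AT}{n^2}\lesssim_A \frac{T}{n^2}
\]
(using \(n(n-1)\ge n^2/2\)). Then \eqref{eq:ustat-risk-holder} gives risk \(\lesssim_A T^{1/4}/\sqrt n\). If \(\theta>\rho\), I use \eqref{eq:ustat-risk-away} instead:
\[
\frac{(\operatorname{Var}_PU_n)^{1/2}}{\theta}\le \frac{2A^{1/4}T^{1/4}}{\sqrt n}+\frac{(2AT)^{1/2}}{\theta\sqrt{n(n-1)}}.
\]
Here the first term is already of the right order. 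In the second term, \(1/\theta<\sqrt n/T^{1/4}\), so it is at most \(\sqrt{2A}\,T^{1/4}\sqrt n/\sqrt{n(n-1)}\lesssim_A T^{1/4}/\sqrt n\).

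Both cases therefore give
\[
\sup_{P\in\mathcal P(A)}\mathbb E_P|\widehat{\operatorname{KSD}}_U-\operatorname{KSD}(P_0,P)|\le C_A\sqrt{\|C_\star\|_{\mathrm{HS}}/n}
\]
for all \(n\ge2\). Combining this with Theorem~\ref{thm:lower} gives \(\asymp_A\) for all sufficiently large \(n\), and shows that \(\widehat{\operatorname{KSD}}_U\) attains the rate.
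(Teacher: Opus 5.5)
Your proposal is correct and is essentially the paper's proof. Like the paper, you take the lower bound from Theorem~\ref{thm:lower}, insert $V_P\le A\operatorname{tr}(C_\star^2)$ into the variance bound of Theorem~\ref{thm:ustat}, and split at $\theta\le\operatorname{tr}(C_\star^2)^{1/4}/\sqrt n$, using the H\"older-type risk bound below the threshold and the $1/\theta$ bound above it; your version additionally makes the $A$-dependent constants explicit and notes that the two-sided $\asymp_A$ holds only for $n$ large enough for Theorem~\ref{thm:lower} to apply.
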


\begin{proof}
The lower bound follows from Theorem~\ref{thm:lower}.  We prove the upper
bound for \(\widehat{\operatorname{KSD}}_U\).

Set
\[
  S:=\operatorname{tr}(C_\star^2),
  \qquad
  \theta:=\operatorname{KSD}(P_0,P).
\]
For \(P\in\mathcal P(A)\), we have \(V_P\le AS\).  Hence
Theorem~\ref{thm:ustat} gives
\[
  \operatorname{Var}_P(U_n)
  \lesssim_A
  \frac{S^{1/2}\theta^2}{n}
  +
  \frac{S}{n^2}.
\]
Using the two bounds in Theorem~\ref{thm:ustat}, we obtain
\[
  \mathbb E_P
  \left|
  \widehat{\operatorname{KSD}}_U-\theta
  \right|
  \le
  \min\left\{
  \operatorname{Var}_P(U_n)^{1/4},
  \frac{\operatorname{Var}_P(U_n)^{1/2}}{\theta}
  \right\},
\]
with the convention that the second term is omitted when \(\theta=0\).
Therefore,
\[
  \mathbb E_P
  \left|
  \widehat{\operatorname{KSD}}_U-\theta
  \right|
  \lesssim_A
  \min\left\{
  \left(
  \frac{S^{1/2}\theta^2}{n}
  +
  \frac{S}{n^2}
  \right)^{1/4},
  \frac{
  \left(
  \frac{S^{1/2}\theta^2}{n}
  +
  \frac{S}{n^2}
  \right)^{1/2}
  }{\theta}
  \right\}.
\]
If \(\theta\le S^{1/4}/\sqrt n\), the first term is
\(\lesssim_A S^{1/4}/\sqrt n\).  If
\(\theta>S^{1/4}/\sqrt n\), the second term is
\(\lesssim_A S^{1/4}/\sqrt n\).  Hence, uniformly over
\(P\in\mathcal P(A)\),
\[
  \mathbb E_P
  \left|
  \widehat{\operatorname{KSD}}_U
  -
  \operatorname{KSD}(P_0,P)
  \right|
  \lesssim_A
  \frac{\operatorname{tr}(C_\star^2)^{1/4}}{\sqrt n}.
\]
Combining this upper bound with Theorem~\ref{thm:lower} proves the result.
\end{proof}

Corollary~\ref{cor:optimal} identifies the Hilbert--Schmidt norm of
$C_\star$ as the spectral quantity governing the minimax risk of KSD
estimation, and shows that the debiased estimator attains it without
estimating the Stein mean embedding.

We now compare the plug-in estimator to this benchmark.  In Theorem~\ref{thm:upper} we have seen that the plug-in 
estimator is bounded above by the trace scale. Strictly speaking, this does not preclude the possibility that the plug-in is
minimax optimal, unless we show that the plug-in is also bounded below by the trace scale. The next result shows that this is indeed the case.

\begin{proposition}[Plug-in pinned to the trace scale]\label{prop:vlower}
Assume \(\operatorname{tr}(C_\star)>0\).  Under
Assumption~\ref{ass:integrability}, if
\(\mathbb{E}_{P_0}K_{P_0}(X,X)^2<\infty\), then for
\[
  n\ge
  n_0
  :=
  \left\lceil
  \frac{
  \mathbb{E}_{P_0}K_{P_0}(X,X)^2
  }{
  \operatorname{tr}(C_\star)^2
  }
  \right\rceil,
\]
we have
\[
  \frac{1}{2}
  \sqrt{\frac{\operatorname{tr}(C_\star)}{n}}
  \le
  \mathbb{E}_{P_0}\widehat{\operatorname{KSD}}_V
  \le
  \sqrt{\frac{\operatorname{tr}(C_\star)}{n}}.
\]
\end{proposition}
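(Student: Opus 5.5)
The upper bound is already contained in Theorem~\ref{thm:upper}: at the target, Jensen gives $\mathbb E_{P_0}\widehat{\operatorname{KSD}}_V\le(\mathbb E_{P_0}\|\widehat\mu_n\|^2)^{1/2}=\sqrt{\operatorname{tr}(C_\star)/n}$. The whole task is therefore the lower bound. I will work with the squared statistic $Z:=\widehat{\operatorname{KSD}}_V^2=\|\widehat\mu_n\|_{\mathcal H}^2\ge0$. Its mean is $\mathbb E_{P_0}Z=\operatorname{tr}(C_\star)/n=:m$, because $\mu_{P_0}=0$ kills the cross terms in $\mathbb E K_{P_0}(X_i,X_j)$ for $i\neq j$. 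The plan is to show that $Z$ concentrates around $m$ in relative terms once $n\ge n_0$, and then to pass from $Z$ to $\sqrt Z$ via an elementary inequality.

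\textbf{Step 1: second moment of $Z$.} Write $Z=n^{-2}\sum_i K_{P_0}(X_i,X_i)+n^{-2}\sum_{i\neq j}K_{P_0}(X_i,X_j)=:D+O$. Under $P_0$ the off-diagonal part is degenerate. For distinct pairs, $\mathbb E[K(X_i,X_j)K(X_k,X_l)]$ vanishes unless $\{i,j\}=\{k,l\}$, since conditioning on the shared or unshared indices produces a factor $\mathbb E_{P_0}\xi_{P_0}=0$. The same argument gives $\mathbb E[D\,O]=0$, because each term $K(X_i,X_i)K(X_k,X_l)$ with $k\neq l$ has at least one index appearing once. This yields
\[
  \mathbb E Z^2=\mathbb E D^2+\mathbb E O^2,\qquad
  \mathbb E O^2=\frac{2n(n-1)}{n^4}\operatorname{tr}(C_\star^2)\le\frac{2}{n^2}\operatorname{tr}(C_\star)^2,
\]
using $\operatorname{tr}(C_\star^2)\le\operatorname{tr}(C_\star)^2$. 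For the diagonal part, write $M:=\mathbb E_{P_0}K_{P_0}(X,X)^2$ and $t:=\operatorname{tr}(C_\star)$. Then
\[
  \mathbb E D^2=\frac{nM+n(n-1)t^2}{n^4}\le\frac{t^2}{n^2}\Bigl(1+\frac{M}{nt^2}\Bigr)\le\frac{2t^2}{n^2}
\]
for $n\ge n_0$. Altogether $\mathbb E Z^2\le 4m^2$ when $n\ge n_0$.

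\textbf{Step 2: from $Z$ to $\sqrt Z$.} I use the Hölder/Paley--Zygmund-type inequality $\mathbb E Z=\mathbb E[Z^{1/3}Z^{2/3}]\le(\mathbb E Z^{1/2})^{2/3}(\mathbb E Z^2)^{1/3}$, which is Hölder with exponents $3/2$ and $3$. It gives
\[
  \mathbb E\sqrt Z\ge\frac{(\mathbb E Z)^{3/2}}{(\mathbb E Z^2)^{1/2}}\ge\frac{m^{3/2}}{2m}=\frac12\sqrt m,
\]
which is exactly the claimed $\tfrac12\sqrt{\operatorname{tr}(C_\star)/n}$.

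The main obstacle is the constant bookkeeping in Step 1. We need $\mathbb E Z^2\le4m^2$ exactly, so the crude bound $\operatorname{tr}(C_\star^2)\le\operatorname{tr}(C_\star)^2$ together with the threshold $n_0$ must fit. The count above gives $(1+1)+2=4$ at worst, which works. I also need to justify the degeneracy computations, including the integrability of $K_{P_0}(X,Y)^2$. That follows from Cauchy--Schwarz, $K(x,y)^2\le K(x,x)K(y,y)$, together with the assumption $M<\infty$, and this also legitimizes the Fubini interchanges used for the vanishing cross terms.
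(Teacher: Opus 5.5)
Your proposal is correct and follows essentially the same route as the paper. Your $\mathbb E Z^2$ for $Z=\|\widehat\mu_n\|^2$ coincides exactly with the paper's fourth-moment identity for $\|\widehat\mu_n\|$; you split it into diagonal and off-diagonal parts, whereas the paper sorts by index pairings. Your H\"older step with exponents $3/2$ and $3$ is the paper's interpolation inequality $\|Z\|_{L^2}\le\|Z\|_{L^1}^{1/3}\|Z\|_{L^4}^{2/3}$, and you use the same bound $\operatorname{tr}(C_\star^2)\le\operatorname{tr}(C_\star)^2$ together with the threshold $n_0$ to reach the constant $4$.
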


The plug-in estimator cannot benefit from the degeneracy at the target.
Even when the true KSD is zero, the empirical Stein mean \(\widehat\mu_n\)
has Hilbert-space variance \(\operatorname{tr}(C_\star)/n\), and the plug-in
takes its norm; its risk is therefore pinned to the trace scale.  The trace
scale is thus not only the upper bound of Theorem~\ref{thm:upper} but also a
lower bound for the plug-in itself.

Combining Proposition~\ref{prop:vlower} with the minimax rate gives
the precise loss of the plug-in estimator.

\begin{corollary}[Spectral suboptimality of the plug-in]\label{cor:gap}
Under the assumptions of Proposition~\ref{prop:vlower}, the plug-in's risk
at the target exceeds the minimax benchmark by
\[
  \frac{
  \mathbb{E}_{P_0}\widehat{\operatorname{KSD}}_V
  }{
  \operatorname{tr}(C_\star^2)^{1/4}/\sqrt{n}
  }
  \;\asymp\;
  \left(
  \frac{\operatorname{tr}(C_\star)^2}
       {\operatorname{tr}(C_\star^2)}
  \right)^{1/4}
  =
  r_{\mathrm{eff}}(C_\star)^{1/4}.
\]
\end{corollary}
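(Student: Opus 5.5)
The corollary follows by dividing the two-sided bound of Proposition~\ref{prop:vlower} by the Hilbert--Schmidt benchmark. No new probabilistic estimate is needed. Fix \(n\ge n_0\) as in Proposition~\ref{prop:vlower}. For such \(n\) we have
\[
  \frac12\sqrt{\frac{\operatorname{tr}(C_\star)}{n}}
  \le
  \mathbb E_{P_0}\widehat{\operatorname{KSD}}_V
  \le
  \sqrt{\frac{\operatorname{tr}(C_\star)}{n}}.
\]
The assumption \(\operatorname{tr}(C_\star)>0\) gives \(\lambda_1>0\), and hence \(\operatorname{tr}(C_\star^2)\ge\lambda_1^2>0\). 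So the denominator \(\operatorname{tr}(C_\star^2)^{1/4}/\sqrt n\) is strictly positive and \(r_{\mathrm{eff}}(C_\star)\) is well defined. Also \(\operatorname{tr}(C_\star^2)\le \operatorname{tr}(C_\star)^2<\infty\), since \(\sum\lambda_j^2\le(\sum\lambda_j)^2\) for nonnegative \(\lambda_j\).

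Next, divide each side of the display by \(\operatorname{tr}(C_\star^2)^{1/4}/\sqrt n\). The factors \(\sqrt n\) cancel, and both bounds become
\[
  \frac{\operatorname{tr}(C_\star)^{1/2}}{\operatorname{tr}(C_\star^2)^{1/4}}
  =
  \left(\frac{\operatorname{tr}(C_\star)^2}{\operatorname{tr}(C_\star^2)}\right)^{1/4}
  =r_{\mathrm{eff}}(C_\star)^{1/4},
\]
with multiplicative constants \(1/2\) and \(1\). This is the claimed two-sided relation \(\asymp\), with absolute constants and valid for all \(n\ge n_0\).

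There is no genuine obstacle here. The only point needing care is the range of \(n\): the corollary inherits the requirement \(n\ge n_0\) from Proposition~\ref{prop:vlower}, and I would state this explicitly. I would also note why the ratio is meaningful as a statement of suboptimality. By Corollary~\ref{cor:optimal}, \(\operatorname{tr}(C_\star^2)^{1/4}/\sqrt n\) is the minimax rate up to constants depending on \(A\). By Cauchy--Schwarz on the spectrum, \(1\le r_{\mathrm{eff}}(C_\star)\le \operatorname{rank}(C_\star)\), so the plug-in never beats the benchmark and can lose by an arbitrarily large factor.
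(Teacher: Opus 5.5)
Your proposal is correct and is the same argument the paper uses: the paper gives no separate proof and obtains the corollary by combining Proposition~\ref{prop:vlower} with the benchmark, which amounts to dividing its two-sided bound by $\operatorname{tr}(C_\star^2)^{1/4}/\sqrt{n}$. This gives constants $1/2$ and $1$ for $n\ge n_0$. Your extra checks that $\operatorname{tr}(C_\star^2)>0$, and your explicit statement that the range $n\ge n_0$ is inherited from Proposition~\ref{prop:vlower}, are appropriate additions.
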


The price of the plug-in, relative to the debiased estimator, is thus the
fourth root of the effective rank of \(C_\star\).  The plug-in is
sharp only when the spectrum is effectively low-dimensional; when the Stein
spectrum is spread over many directions, the diagonal bias of the
V-statistic makes it strictly suboptimal. Below we consider the example
where the effective rank can be calculated explicitly.

\subsection{Dimension dependence at a Gaussian target}\label{sec:gaussian}
Consider the case where the target is the
standard Gaussian \(P_0=N(0,I_d)\) with Gaussian reproducing kernel
\(k_\gamma(x,y)=\exp(-\gamma\|x-y\|^2)\).  In this case, the Stein kernel is
\[
  K_{P_0}(x,y)
  =
  e^{-\gamma\|x-y\|^2}
  \bigl[
    x^\top y+2\gamma d-(2\gamma+4\gamma^2)\|x-y\|^2
  \bigr],
\]
and hence
\[
  \operatorname{tr}(C_\star)
  =
  \mathbb E_{P_0}K_{P_0}(X,X)
  =
  (1+2\gamma)d .
\]
A direct calculation gives (see  Appendix~\ref{app:gauss} for the detailed derivation)
\[
  \operatorname{tr}(C_\star^2)
  =
  (1+8\gamma)^{-d/2}
  \frac{d}{(1+8\gamma)^2}
  \Bigl[
    (64\gamma^4+32\gamma^3+4\gamma^2)d
    +128\gamma^4+128\gamma^3+80\gamma^2+16\gamma+1
  \Bigr] \asymp_\gamma d^2(1+8\gamma)^{-d/2}.
\]
Considering this in the minimax-optimal scale of Corollary~\ref{cor:optimal} gives the minimax rates
\[
\frac{\operatorname{tr}(C_\star^2)^{1/4}}{\sqrt n}
\asymp_\gamma
\frac{d^{1/2}(1+8\gamma)^{-d/8}}{\sqrt n}.
\]
Notice that it exponentially \emph{decreases} in the dimension $d$.  The
reason is that the Stein kernel is degenerate away from the diagonal: for
independent $X,Y\sim P_0$ the squared distance $\|X-Y\|^2$ concentrates
around $2d$, so the Gaussian kernel treats distinct points as nearly
orthogonal and $K_{P_0}(X,Y)$ is exponentially small.

For fixed bandwidth \(\gamma>0\),  
the V/U gap satisfies 
\[
  r_{\mathrm{eff}}(C_\star)^{1/4}
  =
  \left(
  \frac{\operatorname{tr}(C_\star)^2}
       {\operatorname{tr}(C_\star^2)}
  \right)^{1/4}
  \asymp_\gamma
  (1+8\gamma)^{d/8}.
\]
Hence, for a fixed bandwidth, the plug-in loses an exponential factor in the
dimension. The picture changes if the bandwidth is scaled with the dimension. If
\(\gamma=\alpha/d\) for fixed \(\alpha>0\), then
\[
  \operatorname{tr}(C_\star^2)\asymp_\alpha d,
  \qquad
  \frac{\operatorname{tr}(C_\star^2)^{1/4}}{\sqrt n}
  \asymp_\alpha
  \frac{d^{1/4}}{\sqrt n},
\]
and
\[
  r_{\mathrm{eff}}(C_\star)^{1/4}
  \asymp_\alpha
  d^{1/4}.
\]
Thus bandwidth choice changes the dimension dependence of the V/U gap from exponential to a polynomial one.
\begin{figure}[t]
\centering
\includegraphics[width=\textwidth]{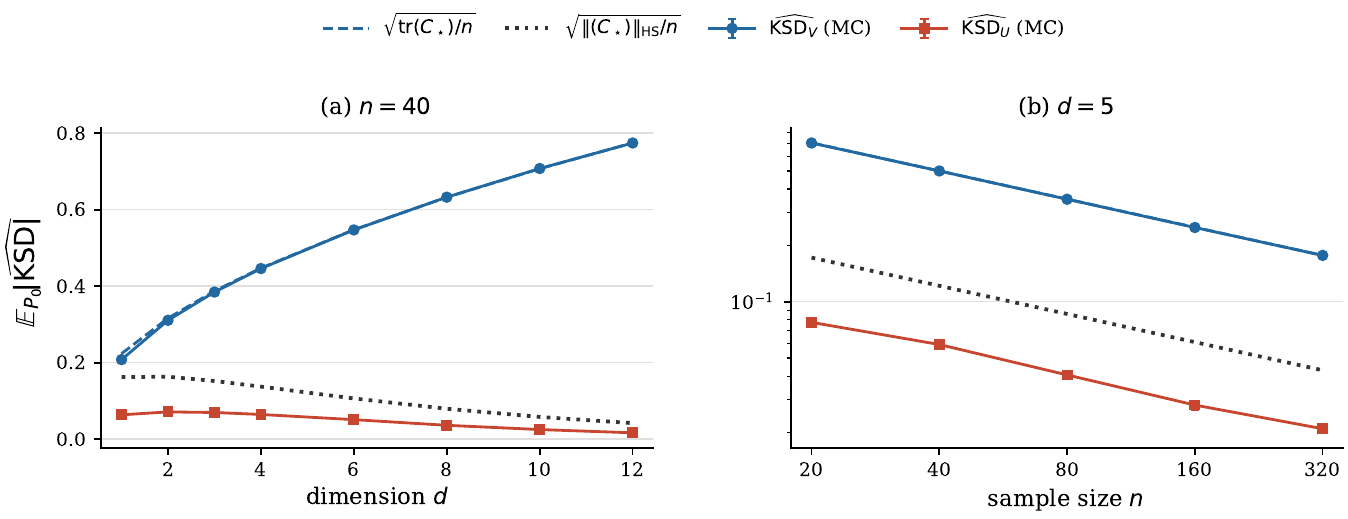}
\caption{Comparison of the empirical target risks with the trace and
Hilbert--Schmidt scales for \(P_0=N(0,I_d)\) and the Gaussian kernel with
\(\gamma=\tfrac12\).  \textbf{(a)} The four quantities are plotted against
the dimension \(d\) for fixed \(n=40\).  The plug-in risk increases with
the trace scale, whereas the square-root U-statistic risk remains much
smaller and follows the decreasing Hilbert--Schmidt scale up to a constant.
\textbf{(b)} The same quantities are plotted against the sample size \(n\)
for fixed \(d=5\) on logarithmic axes.  The parallel curves exhibit the
common \(n^{-1/2}\) rate and show that the difference between the estimators
lies in their spectral constants.  In both panels, circles and squares are
the Monte Carlo risks of \(\widehat{\operatorname{KSD}}_V\) and
\(\widehat{\operatorname{KSD}}_U\), respectively; the dashed curve is
\(\sqrt{\operatorname{tr}(C_\star)/n}\), and the dotted curve is
\(\sqrt{\|C_\star\|_{\mathrm{HS}}/n}\). Each marker averages 2,000
independent replications, and the error bars are 95\% Monte Carlo confidence
intervals.}
\label{fig:exp}
\end{figure}

\paragraph{Numerical illustration.}
We include a simulation at the Gaussian target to visualize the
trace-versus-Hilbert--Schmidt separation above.  We take
\(P_0=N(0,I_d)\), use the Gaussian kernel \(k_\gamma\) with
\(\gamma=\tfrac12\), and sample \(X_1,\dots,X_n\sim P_0\).  Since
\(\operatorname{KSD}(P_0,P_0)=0\), the risks displayed in
Figure~\ref{fig:exp} are
\[
  \mathbb E_{P_0^{\otimes n}}
  \widehat{\operatorname{KSD}}_V
  \quad\text{and}\quad
  \mathbb E_{P_0^{\otimes n}}
  \widehat{\operatorname{KSD}}_U.
\]
Each marker is an average over 2,000 independent Monte Carlo replications;
the error bars are 95\% Monte Carlo confidence intervals.  The trace and
Hilbert--Schmidt reference curves are evaluated using the exact Gaussian
formulas above.

The left panel shows the dimension dependence predicted by the explicit
Gaussian calculation.  The plug-in risk closely tracks the trace scale,
whereas the square-root U-statistic follows the smaller Hilbert--Schmidt
scale up to a spectral constant.  The right panel shows that both estimators
have \(n^{-1/2}\) sample-size behavior; their separation is therefore in
the spectral scale rather than in the power of \(n\).

\section{Conclusion}\label{sec:conclusion}

We proved that the minimax rate of estimating the kernel Stein
discrepancy at its target is of order 
\(
  \mathcal{O}\left(\sqrt{\|C_\star\|_{\mathrm{HS}}/n}\right)
\)
attained by the debiased square-root U-statistic but not by the plug-in
V-statistic, whose risk is $\asymp\sqrt{\operatorname{tr}(C_\star)/n}$ and
exceeds the optimum by $r_{\mathrm{eff}}(C_\star)^{1/4}$. For Gaussian
targets with a fixed-bandwidth Gaussian kernel this factor is exponential
in the dimension $d$.

The mechanism is the degeneracy of the Stein kernel at $P_0$ forced by
Stein's identity: the variance of $U_n$ collapses from $O(n^{-1})$ to
$O(n^{-2})$, and the square root turns this variance collapse into the Hilbert--Schmidt
risk scale. The plug-in cannot see this collapse, because its squared form retains the
diagonal terms $K_{P_0}(X_i,X_i)$ that keep it at the trace scale. The
scalar functional $\operatorname{KSD}(P_0,P)$ is thus easier to estimate
than the embedding $\mu_P$, a saving the plug-in does not exploit; for KSD
diagnostics in MCMC or variational inference we recommend the U-statistic
form, especially in high dimensions.

Our analysis assumes that the target score \(s_0=\nabla\log p_0\) is known. 
If the score is estimated, as in score matching or learned
energy-based models, the KSD estimator incurs an additional error not
covered by the present bounds, and identifying the sharp scale of this
error is a natural next question.

%% ---- Bibliography -------------------------------------------------
\bibliographystyle{unsrtnat}
\bibliography{references}

%% ====================================================================
\appendix
\section{Proof of Theorem~\ref{thm:ustat}}\label{app:proof-ustat}
Unbiasedness follows from the KSD identity:
\[
  \mathbb E_P K_{P_0}(X_1,X_2)
  =
  \|\mu_P\|_{\mathcal H}^2
  =
  \operatorname{KSD}^2(P_0,P).
\]

Let
\[
  h(x,y):=K_{P_0}(x,y).
\]
The variance formula for a second-order U-statistic 
\citep[][Eq.~5.13]{hoeffding1948class} gives
\[
  \operatorname{Var}_P(U_n)
  =
  \binom n2^{-1}
  \bigl[
  2(n-2)\zeta_1+\zeta_2
  \bigr],
\]
where
\[
  \zeta_1
  =
  \operatorname{Var}_P
  \left(
  \mathbb E[h(X,Y)\mid X]
  \right)
  =
  \operatorname{Var}_P
  \left(
  \left\langle
  \xi_{P_0}(X),\mu_P
  \right\rangle_{\mathcal H}
  \right),
\]
and
\[
  \zeta_2
  =
  \operatorname{Var}_P K_{P_0}(X,Y).
\]
Clearly,
\[
  \zeta_2
  \le
  V_P.
\]

It remains to bound \(\zeta_1\) in terms of \(V_P\).  Define the
second-moment operator
\[
  M_P
  :=
  \mathbb E_P
  \left[
  \xi_{P_0}(X)\otimes \xi_{P_0}(X)
  \right].
\]
Then
\[
  V_P
  =
  \mathbb E_{P\otimes P}
  \left\langle
  \xi_{P_0}(X),\xi_{P_0}(Y)
  \right\rangle_{\mathcal H}^2
  =
  \|M_P\|_{\mathrm{HS}}^2.
\]
Therefore,
\[
\begin{aligned}
  \zeta_1
  &\le
  \mathbb E_P
  \left\langle
  \xi_{P_0}(X),\mu_P
  \right\rangle_{\mathcal H}^2  \\
  &=
  \langle M_P\mu_P,\mu_P\rangle_{\mathcal H}  \\
  &\le
  \|M_P\|_{\mathrm{op}}\|\mu_P\|_{\mathcal H}^2  \\
  &\le
  \|M_P\|_{\mathrm{HS}}\theta^2  \\
  &=
  V_P^{1/2}\theta^2 .
\end{aligned}
\]
Thus
\[
  \operatorname{Var}_P(U_n)
  \le
  \frac{4\theta^2V_P^{1/2}}{n}
  +
  \frac{2V_P}{n(n-1)}.
\]

We next pass from the squared estimator to the KSD itself.  Since the map
\(u\mapsto\sqrt{u_+}\) is \(1/2\)-Hölder on \(\mathbb R\), for \(b\ge0\),
\[
  |\sqrt{a_+}-\sqrt b|
  \le
  \sqrt{|a-b|}.
\]
Applying this with \(a=U_n\) and \(b=\theta^2\), and then using Jensen's
inequality, gives
\[
  \mathbb E_P
  \left|
  \widehat{\operatorname{KSD}}_U-\theta
  \right|
  \le
  \bigl(\operatorname{Var}_P U_n\bigr)^{1/4}.
\]
If \(\theta>0\), we may instead use
\[
  |\sqrt{a_+}-\sqrt b|
  \le
  \frac{|a-b|}{\sqrt b},
  \qquad b>0.
\]
With \(b=\theta^2\), this gives
\[
  \mathbb E_P
  \left|
  \widehat{\operatorname{KSD}}_U-\theta
  \right|
  \le
  \frac{\mathbb E_P|U_n-\theta^2|}{\theta}
  \le
  \frac{\bigl(\operatorname{Var}_P U_n\bigr)^{1/2}}{\theta}.
\]

At \(P=P_0\), Stein's identity gives \(\theta=0\), hence \(\zeta_1=0\).
Moreover,
\[
  V_{P_0}
  =
  \mathbb E_{P_0\otimes P_0}K_{P_0}(X,Y)^2
  =
  \operatorname{tr}(C_\star^2).
\]
Therefore
\[
  \operatorname{Var}_{P_0}(U_n)
  \le
  \frac{2\operatorname{tr}(C_\star^2)}{n(n-1)},
\]
and \eqref{eq:ustat-null} follows.

\section{Proof of Proposition~\ref{prop:vlower}}\label{app:proof-vlower}

The upper bound was already shown in Theorem~\ref{thm:upper}. It remains
to prove the lower bound.
Write
\[
  \xi_i:=\xi_{P_0}(X_i),
  \qquad
  \widehat\mu_n=\frac1n\sum_{i=1}^n\xi_i,
  \qquad
  Z:=\|\widehat\mu_n\|.
\]
The variables $\xi_i$ are i.i.d.\ and centered by Stein's identity.  Hence,
using $\mathbb E\|\xi_i\|^2=\operatorname{tr}(C_\star)$ and independence,
\[
  \mathbb E Z^2
  =\frac1{n^2}\sum_{i,j=1}^n
    \mathbb E\langle\xi_i,\xi_j\rangle
  =\frac{\operatorname{tr}(C_\star)}{n}.
\]
We next compute the fourth moment.  Expanding
\[
  \left\|\sum_{i=1}^n\xi_i\right\|^4
  =\sum_{i,j,k,l=1}^n
    \langle\xi_i,\xi_j\rangle
    \langle\xi_k,\xi_l\rangle,
\]
independence and centering show that a term can be nonzero only if every
index appears at least twice.  The terms with all four indices equal
contribute $n\,\mathbb E_{P_0}K_{P_0}(X,X)^2$.  For two distinct indices,
the pairing $i=j$, $k=l$ contributes $(\operatorname{tr}C_\star)^2$, while
the other two pairings each contribute
$\mathbb E\langle\xi,\xi'\rangle^2=\operatorname{tr}(C_\star^2)$.
Consequently,
\[
  \mathbb E Z^4
  =\frac{n-1}{n^3}
    \left[(\operatorname{tr}C_\star)^2
      +2\operatorname{tr}(C_\star^2)\right]
    +\frac{\mathbb E_{P_0}K_{P_0}(X,X)^2}{n^3}.
\]
In particular, the assumed fourth moment makes $Z\in L^4$.  The
$L^p$ interpolation inequality
\(
  \|Z\|_{L^2}\le \|Z\|_{L^1}^{1/3}\|Z\|_{L^4}^{2/3}
\)
therefore yields
\[
  \mathbb E Z
  \ge
  \frac{(\mathbb E Z^2)^{3/2}}{(\mathbb E Z^4)^{1/2}}.
\]
Since
$\operatorname{tr}(C_\star^2)\le\operatorname{tr}(C_\star)^2$, the fourth
moment identity gives
\[
  \mathbb E Z^4
  \le
  \frac{3\operatorname{tr}(C_\star)^2}{n^2}
  +\frac{\mathbb E_{P_0}K_{P_0}(X,X)^2}{n^3}.
\]
For $n\ge n_0$ as defined in the proposition, this is at most
$4(\operatorname{tr}(C_\star)/n)^2$.  Substitution into the interpolation
bound proves
\[
  \mathbb E_{P_0}\widehat{\operatorname{KSD}}_V
  =\mathbb E Z
  \ge
  \frac12\sqrt{\frac{\operatorname{tr}(C_\star)}{n}},
\]
as required.

\section{Gaussian target and Gaussian kernel}\label{app:gauss}

\begin{proposition}[Stein kernel for Gaussian target]\label{prop:gausskernel}
For $P_0=N(0,I_d)$, $s_0(x)=-x$, and
\[
  K_{P_0}(x,y)
  =e^{-\gamma\|x-y\|_2^2}
   \bigl[x^\top y+2\gamma d-(2\gamma+4\gamma^2)\|x-y\|_2^2\bigr],
  \quad
  K_{P_0}(x,x)=\|x\|_2^2+2\gamma d.
\]
\end{proposition}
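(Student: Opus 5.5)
The statement to prove is Proposition~\ref{prop:gausskernel}: for $P_0=N(0,I_d)$ and the Gaussian kernel $k_\gamma$, it gives the closed form of the Stein kernel $K_{P_0}(x,y)$ and its diagonal. The plan is direct substitution into the closed form~\eqref{eq:steinkernel}, term by term.

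First, the score is $s_0(x)=\nabla\log p_0(x)=-x$, because $\log p_0(x)=-\|x\|^2/2+\mathrm{const}$. Next we record the derivatives of $k(x,y)=e^{-\gamma\|x-y\|^2}$:
\[
  \nabla_x k=-2\gamma(x-y)k,\qquad
  \nabla_y k=2\gamma(x-y)k .
\]
For the mixed term, differentiate $\partial_{x_i}k=-2\gamma(x_i-y_i)k$ with respect to $y_i$:
\[
  \partial_{y_i}\partial_{x_i}k=\bigl[2\gamma-4\gamma^2(x_i-y_i)^2\bigr]k .
\]
Summing over $i$ gives
\[
  \nabla_x\!\cdot\!\nabla_y k=\bigl[2\gamma d-4\gamma^2\|x-y\|^2\bigr]k .
\]

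Now substitute into~\eqref{eq:steinkernel}, term by term:
\begin{itemize}
  \item $s_0(x)^\top s_0(y)\,k=x^\top y\,k$;
  \item $s_0(x)^\top\nabla_y k=-2\gamma\,x^\top(x-y)\,k$;
  \item $s_0(y)^\top\nabla_x k=2\gamma\,y^\top(x-y)\,k$.
\end{itemize}
The two cross terms add to
\[
  -2\gamma\,(x-y)^\top(x-y)\,k=-2\gamma\|x-y\|^2k .
\]
Collecting everything gives
\[
  K_{P_0}(x,y)=k\bigl[x^\top y+2\gamma d-(2\gamma+4\gamma^2)\|x-y\|^2\bigr],
\]
which is the claimed formula. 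Setting $y=x$ makes $k=1$ and kills the $\|x-y\|^2$ term, so $K_{P_0}(x,x)=\|x\|^2+2\gamma d$.

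The only step that needs care is the sign bookkeeping in the two cross terms. They have opposite signs because $\nabla_y k=-\nabla_x k$, and that is exactly why they combine into $-2\gamma\|x-y\|^2$. Everything else is routine differentiation. As a consistency check, $\mathbb E_{P_0}K_{P_0}(X,X)=d+2\gamma d=(1+2\gamma)d$, which matches the trace stated in Section~\ref{sec:gaussian}.
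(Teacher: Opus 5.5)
Your proposal is correct and follows the same route as the paper: compute $\nabla_x k$, $\nabla_y k$ and $\nabla_x\!\cdot\!\nabla_y k$ for the Gaussian kernel, substitute $s_0(x)=-x$ into the closed form of the Stein kernel, and combine the two cross terms into $-2\gamma\|x-y\|^2k$. Your write-up differs only in spelling out the sign bookkeeping that the paper compresses into ``collecting terms.''
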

\begin{proof}
Write $r=x-y$, $k=e^{-\gamma\|r\|^2}$. Then $\nabla_x k=-2\gamma r\,k$,
$\nabla_y k=2\gamma r\,k$, $\nabla_x\!\cdot\!\nabla_y k
=2\gamma d\,k-4\gamma^2\|r\|^2 k$. Substituting $s_0(x)=-x$,
$s_0(y)=-y$ into \eqref{eq:steinkernel} and collecting terms gives the
claim; setting $y=x$ gives $K_{P_0}(x,x)$.
\end{proof}

\begin{proposition}[Spectral functionals]\label{prop:traces}
Under $P_\star=N(0,I_d)$, $\operatorname{tr}(C_\star)=(1+2\gamma)d$, and
\[
  \operatorname{tr}(C_\star^2)
  =(1+8\gamma)^{-d/2}\frac{d}{(1+8\gamma)^2}
  \Bigl[(64\gamma^4+32\gamma^3+4\gamma^2)d
    +128\gamma^4+128\gamma^3+80\gamma^2+16\gamma+1\Bigr].
\]
\end{proposition}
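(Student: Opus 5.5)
The identity \(\operatorname{tr}(C_\star)=(1+2\gamma)d\) is immediate from Proposition~\ref{prop:gausskernel}. Since \(\operatorname{tr}(C_\star)=\mathbb E_{P_0}K_{P_0}(X,X)\), we get \(\operatorname{tr}(C_\star)=\mathbb E\|X\|^2+2\gamma d=d+2\gamma d\). For the second functional I would use the identity already noted in the paper:
\[
  \operatorname{tr}(C_\star^2)=\|C_\star\|_{\mathrm{HS}}^2=\mathbb E_{P_0\otimes P_0}K_{P_0}(X,Y)^2 .
\]
This follows by the same computation as \(V_P=\|M_P\|_{\mathrm{HS}}^2\) in Appendix~\ref{app:proof-ustat}, with \(M_{P_0}=C_\star\). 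The task thus reduces to a Gaussian integral of the squared closed form from Proposition~\ref{prop:gausskernel}.

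\textbf{Rotation.} The key simplification is the rotation \(U=(X+Y)/\sqrt2\), \(V=(X-Y)/\sqrt2\). Under \(P_0\otimes P_0\) these are independent \(N(0,I_d)\) vectors, and
\[
  X^\top Y=\tfrac12\bigl(\|U\|^2-\|V\|^2\bigr),
  \qquad
  \|X-Y\|^2=2\|V\|^2 .
\]
With \(c:=2\gamma+4\gamma^2\), this gives
\[
  K_{P_0}(X,Y)^2=e^{-4\gamma\|V\|^2}\Bigl[\tfrac12\|U\|^2+2\gamma d-\bigl(\tfrac12+2c\bigr)\|V\|^2\Bigr]^2 .
\]

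\textbf{Absorbing the exponential weight.} Against the density of \(V\), the factor \(e^{-4\gamma\|V\|^2}\) turns \(N(0,I_d)\) into \(N\bigl(0,(1+8\gamma)^{-1}I_d\bigr)\) and produces the multiplicative constant \((1+8\gamma)^{-d/2}\). Hence
\[
  \operatorname{tr}(C_\star^2)=(1+8\gamma)^{-d/2}\,\mathbb E\Bigl[\tfrac12 A+2\gamma d-\tfrac{1/2+2c}{1+8\gamma}\,B\Bigr]^2 ,
\]
where \(A,B\) are independent \(\chi^2_d\) variables.

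\textbf{Moments.} The remaining expectation needs only \(\mathbb E A=d\) and \(\mathbb E A^2=d^2+2d\), and likewise for \(B\). It is cleanest to write the bracket as its mean plus a fluctuation. With \(\beta:=\tfrac{1/2+2c}{1+8\gamma}\), the squared mean is \(d^2\bigl(\tfrac12+2\gamma-\beta\bigr)^2\) and the variance is \(2d\bigl(\tfrac14+\beta^2\bigr)\). Note that \(\tfrac12+2c=\tfrac12+4\gamma+8\gamma^2\).

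\textbf{Main obstacle.} The only real difficulty is the algebra of putting everything over \((1+8\gamma)^2\) and matching the stated polynomial. Two checks guide this step. First, the \(d^2\) coefficient should come out as \(4\gamma^2(1+4\gamma)^2(1+2\gamma)^2/\ldots\), and one verifies it agrees with \(64\gamma^4+32\gamma^3+4\gamma^2=4\gamma^2(1+4\gamma)^2\) after clearing the \((1+8\gamma)^2\) denominator. Second, the linear-in-\(d\) coefficient must reduce to \(128\gamma^4+128\gamma^3+80\gamma^2+16\gamma+1\). If either check fails, I would recheck the bracket expression, in particular the sign and coefficient of \(\|V\|^2\). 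The \(\asymp_\gamma d^2(1+8\gamma)^{-d/2}\) claim and the \(\gamma=\alpha/d\) asymptotics then follow by reading off leading terms, using \((1+8\alpha/d)^{-d/2}\to e^{-4\alpha}\).
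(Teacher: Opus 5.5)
Your proposal is correct and is essentially the paper's argument. The same sum/difference rotation reduces $\operatorname{tr}(C_\star^2)=\mathbb E K_{P_0}(X,Y)^2$ to chi-square moments, and your Gaussian tilting of the weight $e^{-4\gamma\|V\|^2}$ repackages the paper's MGF evaluations $M_0,M_1,M_2$ at $t=-2\gamma$. Your mean and variance formulas give $d^2\,4\gamma^2(1+4\gamma)^2/(1+8\gamma)^2$ and $d\,[\tfrac12(1+8\gamma)^2+2(\tfrac12+4\gamma+8\gamma^2)^2]/(1+8\gamma)^2$, which expand exactly to the stated polynomial.

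One slip in your first check: there is no factor $(1+2\gamma)^2$ in the $d^2$ coefficient, since $\tfrac12+2\gamma-\beta=2\gamma(1+4\gamma)/(1+8\gamma)$. As you wrote it, that check would fail even though the underlying computation is right.
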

\begin{proof}
With $X,Y\sim N(0,I_d)$ independent put $U=X-Y$, $V=X+Y$
(independent $N(0,2I_d)$), $a=\|U\|^2\sim2\chi^2_d$,
$b=\|V\|^2\sim2\chi^2_d$. Then $\|X-Y\|^2=a$,
$X^\top Y=\tfrac{1}{4}(b-a)$, and by Proposition~\ref{prop:gausskernel},
$K_{P_0}(X,Y)=e^{-\gamma a}Q$ with $Q=\tfrac{b}{4}-\beta a+2\gamma d$,
$\beta=(2\gamma+\tfrac{1}{2})^2$. Conditioning on $a$:
$\mathbb{E}[b\mid a]=2d$, $\mathbb{E}[b^2\mid a]=4d^2+8d$, so
$\mathbb{E}[Q^2\mid a]=\alpha_0+\alpha_1 a+\alpha_2 a^2$ with
\[
  \alpha_0=d(\tfrac{1}{2}+d\beta),
  \quad
  \alpha_1=-\beta d(1+4\gamma),
  \quad
  \alpha_2=\beta^2.
\]
Evaluating via the $\chi^2$ MGF $\mathbb{E}e^{ta}=(1-4t)^{-d/2}$ at
$t=-2\gamma$:
\[
  M_0=(1+8\gamma)^{-d/2},\quad
  M_1=\tfrac{2d}{1+8\gamma}M_0,\quad
  M_2=\tfrac{4d(d+2)}{(1+8\gamma)^2}M_0.
\]
Then $\operatorname{tr}(C_\star^2)
=\alpha_0M_0+\alpha_1M_1+\alpha_2M_2$.
Factoring $(1+8\gamma)^{-d/2}d/(1+8\gamma)^2$ and applying the identities
$16\gamma^2\beta=64\gamma^4+32\gamma^3+4\gamma^2$ and
$\tfrac{1}{2}(1+8\gamma)^2+8\beta^2
=128\gamma^4+128\gamma^3+80\gamma^2+16\gamma+1$
yields Proposition~\ref{prop:traces}.

\end{proof}

\begin{corollary}[Dimension dependence of the gap]\label{cor:dim}
The plug-in scale is $\sqrt{(1+2\gamma)d/n}$.
\begin{enumerate}
\item \emph{(Fixed bandwidth.)} For fixed $\gamma>0$,
  $\operatorname{tr}(C_\star^2)\asymp_\gamma d^2(1+8\gamma)^{-d/2}$,
  the optimal scale is $\asymp_\gamma d^{1/2}(1+8\gamma)^{-d/8}/\sqrt{n}$,
  and the gap $r_{\mathrm{eff}}^{1/4}\asymp_\gamma(1+8\gamma)^{d/8}$
  is \emph{exponential in $d$}.
\item \emph{(Scaled bandwidth $\gamma=\alpha/d$).}
  $\operatorname{tr}(C_\star^2)\asymp_\alpha d$, the optimal scale is
  $\asymp_\alpha d^{1/4}/\sqrt{n}$, and the gap
  $r_{\mathrm{eff}}^{1/4}\asymp_\alpha d^{1/4}$ is polynomial in $d$.
\end{enumerate}
\end{corollary}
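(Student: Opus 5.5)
The plan is to read everything off the closed forms in Proposition~\ref{prop:traces}: $\operatorname{tr}(C_\star)=(1+2\gamma)d$ and the explicit formula for $\operatorname{tr}(C_\star^2)$. The plug-in scale $\sqrt{(1+2\gamma)d/n}$ is immediate from Theorem~\ref{thm:upper} and Proposition~\ref{prop:vlower}. The remaining claims all reduce to two-sided bounds on the bracket
\[
B_d(\gamma):=c_1(\gamma)\,d+c_0(\gamma),
\qquad
c_1=64\gamma^4+32\gamma^3+4\gamma^2=4\gamma^2(1+4\gamma)^2,
\qquad
c_0=128\gamma^4+128\gamma^3+80\gamma^2+16\gamma+1,
\]
and on the prefactor $(1+8\gamma)^{-d/2}(1+8\gamma)^{-2}$. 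For $d\ge1$ the optimal scale and the gap follow mechanically as
\[
\operatorname{tr}(C_\star^2)^{1/4}/\sqrt n
\qquad\text{and}\qquad
r_{\mathrm{eff}}^{1/4}=\Bigl((1+2\gamma)^2d^2/\operatorname{tr}(C_\star^2)\Bigr)^{1/4}.
\]

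\emph{Fixed bandwidth.} Here $c_0,c_1>0$ do not depend on $d$. Hence for $d\ge1$ we have
\[
c_1 d\le B_d\le (c_1+c_0)\,d,
\]
so $d\,B_d\asymp_\gamma d^2$. The factor $(1+8\gamma)^{-2}$ is a $\gamma$-constant. This gives
\[
\operatorname{tr}(C_\star^2)\asymp_\gamma d^2(1+8\gamma)^{-d/2}.
\]
Taking fourth roots yields the optimal scale $d^{1/2}(1+8\gamma)^{-d/8}/\sqrt n$. For the gap, $r_{\mathrm{eff}}\asymp_\gamma d^2/(d^2(1+8\gamma)^{-d/2})=(1+8\gamma)^{d/2}$, whose fourth root is $(1+8\gamma)^{d/8}$.

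\emph{Scaled bandwidth $\gamma=\alpha/d$.} This is the step requiring care, since every coefficient now depends on $d$, and it is where the constants must be shown uniform in $d\ge1$.
\begin{itemize}
\item The exponential factor satisfies $e^{-4\alpha}\le(1+8\alpha/d)^{-d/2}\le1$, using $\log(1+x)\le x$.
\item The factor $(1+8\alpha/d)^{-2}$ lies in $[(1+8\alpha)^{-2},1]$.
\item In the bracket, $c_1 d=4\alpha^2(1+4\alpha/d)^2/d\in[0,4\alpha^2(1+4\alpha)^2]$.
\item Also $c_0\in[1,\,1+16\alpha+80\alpha^2+128\alpha^3+128\alpha^4]$, since $\gamma\le\alpha$.
\end{itemize}
Thus $B_d\asymp_\alpha1$ and $\operatorname{tr}(C_\star^2)\asymp_\alpha d$, so the optimal scale is $d^{1/4}/\sqrt n$. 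Moreover $\operatorname{tr}(C_\star)=(1+2\alpha/d)d\asymp_\alpha d$. Hence $r_{\mathrm{eff}}\asymp_\alpha d^2/d=d$, and the gap is $d^{1/4}$.
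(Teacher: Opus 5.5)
Your proposal is correct and follows the same route as the paper: both read every claim off the closed form in Proposition~\ref{prop:traces}, splitting the bracket as $c_1(\gamma)d+c_0(\gamma)$ with $c_1=4\gamma^2(1+4\gamma)^2$. The only difference is that in the scaled-bandwidth case you give explicit two-sided bounds that hold uniformly in $d\ge1$ (for example $e^{-4\alpha}\le(1+8\alpha/d)^{-d/2}\le1$ and $1\le c_0(\alpha/d)\le c_0(\alpha)$), whereas the paper only takes limits as $d\to\infty$; your version makes the $\asymp_\alpha$ constants explicit and slightly more rigorous.
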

\begin{proof}
The bracket in Proposition~\ref{prop:traces} is $c_1(\gamma)d+c_0(\gamma)$
with $c_1(\gamma)=4\gamma^2(4\gamma+1)^2$.
\emph{(i)} The $c_1 d$ term dominates, giving
$\operatorname{tr}(C_\star^2)\asymp d^2(1+8\gamma)^{-d/2}$ and
$r_{\mathrm{eff}}^{1/4}=(d^2/(d^2(1+8\gamma)^{-d/2}))^{1/4}
=(1+8\gamma)^{d/8}$.
\emph{(ii)} For $\gamma=\alpha/d$: $(1+8\alpha/d)^{-d/2}\to e^{-4\alpha}$,
$c_1(\alpha/d)d\to0$, so $\operatorname{tr}(C_\star^2)\to e^{-4\alpha}d$
and $r_{\mathrm{eff}}^{1/4}\asymp(d^2/d)^{1/4}=d^{1/4}$.
\end{proof}

\section{Proof of Theorem~\ref{thm:lower}}\label{app:proof-lower}
We use the method of two fuzzy hypotheses, in the form of
\citet[Section~2.7.4]{tsybakov2009introduction}.  In the present setting,
the parameter is the probability measure \(P\), the observation law
associated with \(P\) is \(P^{\otimes n}\), and the functional to be
estimated is
\[
  F(P):=\operatorname{KSD}(P_0,P).
\]

Write \(P_\star=P_0\).  Let \((\lambda_j,e_j)_{j\ge1}\) be the nonzero
eigenpairs of \(C_\star\).  For \(\lambda_j>0\), define
\[
  g_j(x)
  :=
  \frac{
  \langle \xi_{P_0}(x),e_j\rangle_{\mathcal H}
  }{\sqrt{\lambda_j}} .
\]
Then
\[
  \mathbb E_{P_\star} g_j=0,
  \qquad
  \mathbb E_{P_\star} g_i g_j=\delta_{ij},
  \qquad
  \mathbb E_{P_\star}
  \bigl[g_j(X)\xi_{P_0}(X)\bigr]
  =
  \sqrt{\lambda_j}\,e_j .
\]
Indeed, the first identity follows from Stein's identity
\(\mu_{P_\star}=0\), the second from
\[
  \langle C_\star e_i,e_j\rangle_{\mathcal H}
  =
  \lambda_i\delta_{ij},
\]
and the third from the definition of \(C_\star\).

Let
\[
  S:=\operatorname{tr}(C_\star^2)>0,
  \qquad
  S_m:=\sum_{j=1}^m\lambda_j^2.
\]
Since \(S_m\uparrow S\), let \(m\) be the smallest integer such that
\[
  S_m\ge \frac12 S.
\]
We fix this \(m\) throughout the construction.

We first give the argument under the additional assumption that, for the
fixed integer \(m\) used below, the coordinates \(g_1,\dots,g_m\) are
bounded.  For
\(\varepsilon=(\varepsilon_1,\dots,\varepsilon_m)\in\{-1,1\}^m\), define
\[
  f_\varepsilon
  =
  1+
  \sum_{j=1}^m
  \frac{a_j\varepsilon_j}{\sqrt{\lambda_j}}g_j,
  \qquad
  dP_\varepsilon=f_\varepsilon\,dP_\star ,
\]
where the amplitudes \(a_j\ge0\) will be chosen below.  If
\begin{equation}
  \label{eq:bounded-likelihood}
  \sum_{j=1}^m
  \frac{a_j}{\sqrt{\lambda_j}}
  \|g_j\|_{L^\infty(P_\star)}
  \le 1,
\end{equation}
then \(0\le f_\varepsilon\le2\) for every \(\varepsilon\).  Since
\(\mathbb E_{P_\star}g_j=0\), we also have
\[
  \int f_\varepsilon\,dP_\star=1.
\]
Thus each \(P_\varepsilon\) is a probability measure and satisfies the
bounded likelihood-ratio condition
\[
0 \le \frac{dP_\varepsilon}{dP_\star}\le 2,
\]
therefore, by Remark~\ref{rem: bounded likelihood}, \(P_\varepsilon\in\mathcal P(A)\) for every \(A\ge4\).

We now define the two fuzzy hypotheses.  Let
\[
  \mu_0=\delta_{P_\star},
  \qquad
  \mu_1
  =
  2^{-m}
  \sum_{\varepsilon\in\{-1,1\}^m}
  \delta_{P_\varepsilon}.
\]
The corresponding mixture experiments are
\[
  \mathbb P_0
  =
  P_\star^{\otimes n},
  \qquad
  \mathbb P_1
  =
  2^{-m}
  \sum_{\varepsilon\in\{-1,1\}^m}
  P_\varepsilon^{\otimes n}.
\]

We next verify the separation condition.  For each
\(\varepsilon\in\{-1,1\}^m\),
\[
  \mu_{P_\varepsilon}
  =
  \mathbb E_{P_\star}
  \left[
  f_\varepsilon(X)\xi_{P_0}(X)
  \right]
  =
  \sum_{j=1}^m a_j\varepsilon_j e_j .
\]
Therefore
\[
  F(P_\varepsilon)
  =
  \operatorname{KSD}(P_0,P_\varepsilon)
  =
  \|\mu_{P_\varepsilon}\|_{\mathcal H}
  =
  \left(\sum_{j=1}^m a_j^2\right)^{1/2}
  =: r,
\]
whereas
\[
  F(P_\star)=\operatorname{KSD}(P_0,P_\star)=0.
\]
Hence the separation condition in the fuzzy-hypothesis theorem holds (see Theorem 2.14, assumption (i) in \cite{tsybakov2009introduction}) with
\[
  c=0,
  \qquad
  s=\frac r2,
  \qquad
  \beta_0=\beta_1=0.
\]

It remains to control the distance between \(\mathbb P_1\) and
\(\mathbb P_0\).  The likelihood ratio of \(\mathbb P_1\) with respect to
\(\mathbb P_0\) is
\[
  L
  =
  \frac{d\mathbb P_1}{d\mathbb P_0}
  =
  2^{-m}
  \sum_{\varepsilon\in\{-1,1\}^m}
  \prod_{i=1}^n f_\varepsilon(X_i).
\]
Thus
\[
  1+\chi^2(\mathbb P_1,\mathbb P_0)
  =
  \mathbb E_{\varepsilon,\varepsilon'}
  \left[
  \mathbb E_{P_\star}
  f_\varepsilon(X)f_{\varepsilon'}(X)
  \right]^n .
\]
By orthonormality of \(g_1,\dots,g_m\),
\[
  \mathbb E_{P_\star}
  f_\varepsilon f_{\varepsilon'}
  =
  1+
  \sum_{j=1}^m
  \frac{a_j^2}{\lambda_j}
  \varepsilon_j\varepsilon'_j .
\]
Let
\[
  \eta_j=\varepsilon_j\varepsilon'_j .
\]
Then \(\eta_1,\dots,\eta_m\) are independent Rademacher random variables, and
\[
  1+\chi^2(\mathbb P_1,\mathbb P_0)
  =
  \mathbb E_{\eta}
  \left[
  \left(
  1+
  \sum_{j=1}^m
  \frac{a_j^2}{\lambda_j}\eta_j
  \right)^n
  \right].
\]
The quantity inside the power is nonnegative, because it is an expectation
of a product of two nonnegative densities.  Therefore, using
\((1+w)^n\le \exp(nw)\) whenever \(1+w\ge0\), we obtain
\[
  1+\chi^2(\mathbb P_1,\mathbb P_0)
  \le
  \mathbb E_{\eta}
  \exp\left\{
  n
  \sum_{j=1}^m
  \frac{a_j^2}{\lambda_j}\eta_j
  \right\}.
\]
By independence of the \(\eta_j\)'s,
\[
  1+\chi^2(\mathbb P_1,\mathbb P_0)
  \le
  \prod_{j=1}^m
  \cosh\left(
  \frac{n a_j^2}{\lambda_j}
  \right).
\]
Since \(\cosh x\le \exp(x^2/2)\), it follows that
\[
  1+\chi^2(\mathbb P_1,\mathbb P_0)
  \le
  \exp\left\{
  \frac12
  \sum_{j=1}^m
  \left(
  \frac{n a_j^2}{\lambda_j}
  \right)^2
  \right\}.
\]

Choose
\[
  c_0=2\log(5/4)
\]
and impose
\[
  \sum_{j=1}^m
  \left(
  \frac{n a_j^2}{\lambda_j}
  \right)^2
  \le c_0.
\]
Then
\[
  \chi^2(\mathbb P_1,\mathbb P_0)
  \le
  e^{c_0/2}-1
  =
  \frac14 .
\]
Therefore, by the \(\chi^2\)-version of the fuzzy-hypothesis theorem (see Theorem 2.15, part (iii) of \cite{tsybakov2009introduction}), there
exists a numerical constant \(\eta>0\) such that every estimator
\(\widehat T\) satisfies
\[
  \sup_{P\in\{P_\star\}\cup\{P_\varepsilon:
  \varepsilon\in\{-1,1\}^m\}}
  P^{\otimes n}
  \left(
  \left|
  \widehat T-F(P)
  \right|
  \ge \frac r2
  \right)
  \ge
  \eta .
\]
Consequently,
\[
  \sup_{P\in\{P_\star\}\cup\{P_\varepsilon:
  \varepsilon\in\{-1,1\}^m\}}
  \mathbb E_P
  \left|
  \widehat T-F(P)
  \right|
  \ge
  \frac r2\,\eta .
\]

We now choose the amplitudes.  
Set
\[
  a_j^2
  =
  \frac{\sqrt{c_0}}{n}
  \frac{\lambda_j^2}{S_m^{1/2}},
  \qquad
  j=1,\dots,m .
\]
Then
\[
  \sum_{j=1}^m
  \left(
  \frac{n a_j^2}{\lambda_j}
  \right)^2
  =
  c_0,
\]
and
\[
  r
  =
  \left(\sum_{j=1}^m a_j^2\right)^{1/2}
  =
  c_0^{1/4}
  \frac{S_m^{1/4}}{\sqrt n}.
\]
The boundedness condition \ref{eq:bounded-likelihood} is satisfied for all sufficiently~
large \(n\), because
\[
  \sum_{j=1}^m
  \frac{a_j}{\sqrt{\lambda_j}}
  \|g_j\|_{L^\infty(P_\star)}
  =
  \mathcal{O}(n^{-1/2}).
\]
Hence
\[
  \inf_{\widehat T}
  \sup_{P\in\mathcal P(A)}
  \mathbb E_P
  \left|
  \widehat T-\operatorname{KSD}(P_0,P)
  \right|
  \ge
  c_1
  \frac{S_m^{1/4}}{\sqrt n} \ge c \frac{S^{1/4}}{\sqrt n},
\]
where \(c >0\) is a numerical constant.

This proves the theorem in the bounded-coordinate case.

\paragraph{Removing boundedness assumption on $g_j$.}
It remains to remove the boundedness assumption.  Fix \(m\).  For
\(\tau>0\), define
\[
  \widetilde g_j^\tau
  =
  g_j\mathbf 1\{|g_j|\le \tau\},
  \qquad
  \bar g_j^\tau
  =
  \widetilde g_j^\tau
  -
  \mathbb E_{P_\star}\widetilde g_j^\tau .
\]
Since \(g_j\in L^2(P_\star)\), we have
\[
  \bar g_j^\tau\to g_j
  \qquad\text{in }L^2(P_\star)
\]
for each fixed \(j\).  Let \(G_\tau\) be the \(m\times m\) Gram matrix of
\[
  \bar g_1^\tau,\dots,\bar g_m^\tau
\]
in \(L^2(P_\star)\).  Since \(g_1,\dots,g_m\) are orthonormal,
\[
  G_\tau\to I_m.
\]
Thus, for all large enough \(\tau\), \(G_\tau\) is invertible.  Define
\[
  \phi^\tau
  =
  G_\tau^{-1/2}\bar g^\tau,
  \qquad
  \bar g^\tau
  =
  (\bar g_1^\tau,\dots,\bar g_m^\tau)^\top .
\]
Then \(\phi_1^\tau,\dots,\phi_m^\tau\) are bounded, zero-mean, and
orthonormal in \(L^2(P_\star)\).  Moreover, for fixed \(m\),
\[
  \max_{1\le j\le m}
  \|\phi_j^\tau-g_j\|_{L^2(P_\star)}
  \to0,
  \qquad
  \max_{1\le j\le m}
  \|\phi_j^\tau\|_{L^\infty(P_\star)}
  \le C_m\tau
\]
for all sufficiently large \(\tau\).

Repeat the preceding construction with \(\phi_j^\tau\) in place of \(g_j\):
\[
  f_\varepsilon^\tau
  =
  1+
  \sum_{j=1}^m
  \frac{a_j\varepsilon_j}{\sqrt{\lambda_j}}
  \phi_j^\tau,
  \qquad
  dP_\varepsilon^\tau
  =
  f_\varepsilon^\tau\,dP_\star .
\]
Choose \(\tau=\tau_n\) such that
\[
  \tau_n\to\infty,
  \qquad
  \frac{\tau_n}{\sqrt n}\to0.
\]
For instance, take \(\tau_n=n^{1/4}\).  Since \(a_j=\mathcal{O}_m(n^{-1/2})\),
\[
  \sum_{j=1}^m
  \frac{a_j}{\sqrt{\lambda_j}}
  \|\phi_j^{\tau_n}\|_{L^\infty(P_\star)}
  \le
  C_m\frac{\tau_n}{\sqrt n}
  \to0.
\]
Thus \(0\le f_\varepsilon^{\tau_n}\le2\) for all sufficiently large \(n\), therefore \(P_\varepsilon^{\tau_n}\in\mathcal P(A)\) for all sufficiently large \(n\).

The chi-square calculation is unchanged, because
\(\phi_1^{\tau_n},\dots,\phi_m^{\tau_n}\) are zero-mean and orthonormal in
\(L^2(P_\star)\).  The only difference is the KSD separation.  Define
\[
  u_j^\tau
  :=
  \mathbb E_{P_\star}
  \left[
  \phi_j^\tau(X)\xi_{P_0}(X)
  \right].
\]
Then
\[
  u_j^\tau\to \sqrt{\lambda_j}e_j
  \qquad\text{in }\mathcal H .
\]
Indeed, for every \(h\in L^2(P_\star)\),
\[
  \left\|
  \mathbb E_{P_\star}
  \left[
  h(X)\xi_{P_0}(X)
  \right]
  \right\|_{\mathcal H}
  \le
  \|h\|_{L^2(P_\star)}
  \|C_\star\|_{\mathrm{op}}^{1/2}.
\]
Applying this inequality with \(h=\phi_j^\tau-g_j\) gives the claim.

Therefore, uniformly over \(\varepsilon\in\{-1,1\}^m\),
\[
  \mu_{P_\varepsilon^{\tau_n}}
  =
  \sum_{j=1}^m
  \frac{a_j\varepsilon_j}{\sqrt{\lambda_j}}
  u_j^{\tau_n}
  =
  \sum_{j=1}^m
  a_j\varepsilon_j e_j
  +
  o(n^{-1/2})
\]
in \(\mathcal H\).  Since
\[
  r
  =
  c_0^{1/4}
  \frac{S_m^{1/4}}{\sqrt n},
\]
we obtain
\[
  \operatorname{KSD}(P_0,P_\varepsilon^{\tau_n})
  =
  r(1+o(1))
\]
uniformly in \(\varepsilon\).  Hence, for all sufficiently large \(n\),
\[
  \operatorname{KSD}(P_0,P_\varepsilon^{\tau_n})
  \ge
  \frac r2 .
\]
Thus the separation condition in the fuzzy-hypothesis theorem holds with
\(c=0\), \(\beta_0=\beta_1=0\), and \(s=r/4\).  The same chi-square bound
therefore gives
\[
  \inf_{\widehat T}
  \sup_{P\in\mathcal P(A)}
  \mathbb E_P
  \left|
  \widehat T-\operatorname{KSD}(P_0,P)
  \right|
  \ge
  c
  \frac{S_m^{1/4}}{\sqrt n}
\]
for all sufficiently large \(n\).  Since \(S_m\ge S/2=\operatorname{tr}(C_\star^2)/2\), this completes the proof.

\end{document}

%% file: math_commands.tex
\usepackage{amsmath,amsfonts,bm,amsthm}

\def\eqref#1{equation~\ref{#1}}
\def\1{\bm{1}}

\DeclareMathAlphabet{\mathsfit}{\encodingdefault}{\sfdefault}{m}{sl}
\SetMathAlphabet{\mathsfit}{bold}{\encodingdefault}{\sfdefault}{bx}{n}

\newcommand{\R}{\mathbb{R}}

\newtheorem{theorem}{Theorem}
 
\newtheorem{proposition}[theorem]{Proposition} 
\newtheorem{remark}[theorem]{Remark}
\newtheorem{corollary}[theorem]{Corollary}

\newtheorem{assumption}{Assumption}

%% file: references.bib
@article{kalinke2024nystr,
  title={Nystr{\"o}m Kernel Stein Discrepancy},
  author={Kalinke, Florian and Szab{\'o}, Zolt{\'a}n and Sriperumbudur, Bharath K},
  journal={arXiv preprint arXiv:2406.08401},
  year={2024}
}

@inproceedings{
balasubramanian2025improved,
title={Improved Finite-Particle Convergence Rates for Stein Variational Gradient Descent},
author={Krishna Balasubramanian and Sayan Banerjee and Promit Ghosal},
booktitle={The Thirteenth International Conference on Learning Representations},
year={2025}
}

@article{kalinke2026nystr,
  title={Nystr{\"o}m Kernel Stein Discrepancy Tests},
  author={Kalinke, Florian and Szab{\'o}, Zolt{\'a}n and Sriperumbudur, Bharath K},
  journal={arXiv preprint arXiv:2605.25173},
  year={2026}
}

@article{liu2026probabilistic,
  title={Probabilistic Inference and Learning with Stein's Method},
  author={Liu, Qiang and Mackey, Lester and Oates, Chris},
  journal={arXiv preprint arXiv:2603.07467},
  year={2026}
}

@article{hagrass2026minimax,
  title={Minimax optimal goodness-of-fit testing with kernel Stein discrepancy},
  author={Hagrass, Omar and Sriperumbudur, Bharath and Balasubramanian, Krishnakumar},
  journal={Bernoulli},
  volume={32},
  number={1},
  pages={299--324},
  year={2026},
  publisher={Bernoulli Society for Mathematical Statistics and Probability}
}

@article{birge1995estimation,
  title={Estimation of Integral Functionals of a Density},
  author={Birg{\'e}, Lucien and Massart, Pascal},
  journal={The Annals of Statistics},
  volume={23},
  number={1},
  year={1995},
  pages={11--29}
}

@article{gine2008simple,
  title={A simple adaptive estimator of the integrated square of a density},
  author={Gin{\'e}, Evarist and Nickl, Richard},
  journal={Bernoulli},
  volume={14},
  number={1},
  year={2008},
  pages={47--61}
}

@inproceedings{barp2019minimum,
  title={Minimum {S}tein discrepancy estimators},
  author={Barp, Alessandro and Briol, Fran{\c c}ois-Xavier and Duncan, Andrew B. and Girolami, Mark and Mackey, Lester},
  booktitle={Advances in Neural Information Processing Systems (NeurIPS)},
  volume={32},
  year={2019}
}

@inproceedings{chen2018stein,
  title={Stein points},
  author={Chen, Wilson Ye and Mackey, Lester and Gorham, Jackson and Briol, Fran{\c c}ois-Xavier and Oates, Chris J.},
  booktitle={International Conference on Machine Learning (ICML)},
  pages={844--853},
  year={2018}
}

@article{kanagawa2019kernel,
  title={A kernel {S}tein test for comparing latent variable models},
  author={Kanagawa, Heishiro and Jitkrittum, Wittawat and Mackey, Lester and Fukumizu, Kenji and Gretton, Arthur},
  journal={Journal of the Royal Statistical Society Series B: Statistical Methodology},
  volume={85},
  number={3},
  pages={986--1011},
  year={2023}
}

@inproceedings{huggins2018random,
  title={Random feature {S}tein discrepancies},
  author={Huggins, Jonathan H. and Mackey, Lester},
  booktitle={Advances in Neural Information Processing Systems (NeurIPS)},
  volume={31},
  year={2018}
}

@inproceedings{jitkrittum2017linear,
  title={A linear-time kernel goodness-of-fit test},
  author={Jitkrittum, Wittawat and Xu, Wenkai and Szab{\'o}, Zolt{\'a}n and Fukumizu, Kenji and Gretton, Arthur},
  booktitle={Advances in Neural Information Processing Systems (NeurIPS)},
  volume={30},
  year={2017}
}

@article{tolstikhin2017minimax,
  title={Minimax estimation of kernel mean embeddings},
  author={Tolstikhin, Ilya and Sriperumbudur, Bharath K and Muandet, Krikamol},
  journal={Journal of Machine Learning Research},
  volume={18},
  number={86},
  pages={1--47},
  year={2017}
}

@inproceedings{stein1972bound,
  title={A bound for the error in the normal approximation to the distribution of a sum of dependent random variables},
  author={Stein, Charles},
  booktitle={Proceedings of the Sixth Berkeley Symposium on Mathematical Statistics and Probability, Volume 2: Probability Theory},
  pages={583--602},
  year={1972},
  publisher={University of California Press}
}

@book{stein1986approximate,
  title={Approximate Computation of Expectations},
  author={Stein, Charles},
  year={1986},
  publisher={Institute of Mathematical Statistics}
}

@book{chen2011normal,
  title={Normal Approximation by Stein's Method},
  author={Chen, Louis H. Y. and Goldstein, Larry and Shao, Qi-Man},
  year={2011},
  publisher={Springer}
}

@article{hoeffding1948class,
  title   = {A Class of Statistics with Asymptotically Normal Distribution},
  author  = {Hoeffding, Wassily},
  journal = {The Annals of Mathematical Statistics},
  volume  = {19},
  number  = {3},
  pages   = {293--325},
  year    = {1948}
}

@inproceedings{
cribeiro2025minimax,
title={The Minimax Lower Bound of Kernel Stein Discrepancy Estimation},
author={Cribeiro-Ramallo, Jose and Aich, Agnideep and Kalinke, Florian and Aich, Ashit Baran and Szab{\'o}, Zolt{\'a}n},
booktitle={The 29th International Conference on Artificial Intelligence and Statistics},
year={2026}
}

@inproceedings{liu2016kernelized,
  title={A kernelized {S}tein discrepancy for goodness-of-fit tests},
  author={Liu, Qiang and Lee, Jason D. and Jordan, Michael I.},
  booktitle={International Conference on Machine Learning (ICML)},
  pages={276--284},
  year={2016}
}

@inproceedings{chwialkowski2016kernel,
  title={A kernel test of goodness of fit},
  author={Chwialkowski, Kacper and Strathmann, Heiko and Gretton, Arthur},
  booktitle={International Conference on Machine Learning (ICML)},
  pages={2606--2615},
  year={2016}
}

@inproceedings{gorham2017measuring,
  title={Measuring sample quality with kernels},
  author={Gorham, Jackson and Mackey, Lester},
  booktitle={International Conference on Machine Learning (ICML)},
  pages={1292--1301},
  year={2017}
}

@inproceedings{gorham2015measuring,
  title={Measuring sample quality with {S}tein's method},
  author={Gorham, Jackson and Mackey, Lester},
  booktitle={Advances in Neural Information Processing Systems (NeurIPS)},
  volume={28},
  year={2015}
}

@article{muandet2017kernel,
  title={Kernel mean embedding of distributions: A review and beyond},
  author={Muandet, Krikamol and Fukumizu, Kenji and Sriperumbudur, Bharath and Sch{\"o}lkopf, Bernhard},
  journal={Foundations and Trends in Machine Learning},
  volume={10},
  number={1--2},
  pages={1--141},
  year={2017}
}

@book{tsybakov2009introduction,
  title={Introduction to Nonparametric Estimation},
  author={Tsybakov, Alexandre B.},
  year={2009},
  publisher={Springer}
}

@inproceedings{tolstikhin2016minimax,
  title={Minimax estimation of maximum mean discrepancy with radial kernels},
  author={Tolstikhin, Ilya O. and Sriperumbudur, Bharath K. and Sch{\"o}lkopf, Bernhard},
  booktitle={Advances in Neural Information Processing Systems (NeurIPS)},
  volume={29},
  pages={1930--1938},
  year={2016}
}

@article{gretton2012kernel,
  title={A kernel two-sample test},
  author={Gretton, Arthur and Borgwardt, Karsten M. and Rasch, Malte J. and Sch{\"o}lkopf, Bernhard and Smola, Alexander},
  journal={Journal of Machine Learning Research},
  volume={13},
  pages={723--773},
  year={2012}
}

@inproceedings{liu2016stein,
  title={Stein variational gradient descent: A general purpose {B}ayesian inference algorithm},
  author={Liu, Qiang and Wang, Dilin},
  booktitle={Advances in Neural Information Processing Systems (NeurIPS)},
  year={2016}
}
